\documentclass[a4paper,11pt,final]{article}
\usepackage{amsmath, amsthm, amssymb}
\usepackage{latexsym}
\usepackage[latin1]{inputenc}
\usepackage{epsf,exscale,times}
\usepackage[T1]{fontenc}
\usepackage[dvips]{graphicx}
\usepackage{graphics,epsfig}
\usepackage{epsfig,rotate,color}
\usepackage{showkeys}
\usepackage{subfigure}  

\setlength{\textwidth}{16cm} \setlength{\textheight}{22cm}
\setlength{\oddsidemargin}{0cm} \setlength{\evensidemargin}{0cm}

\newcommand{\SR}{\mathcal{S}(\mathbb{R})}

\newcommand{\I}{\mathcal{I}}

\newcommand{\F}{\mathcal{F}}
\newcommand{\R}{\mathbb{R}}

\newtheorem{lemme}{Lemma}
\newtheorem{definition}{Definition}
\newtheorem{proposition}{Proposition}

\newtheorem{remark}{Remark}

\title{ Finite difference approximations for a fractional diffusion/anti-diffusion equation}

\author{ Pascal Azerad $^*$ \& Afaf Bouharguane \footnote{Institut de Math\'ematiques et Mod\'elisation de Montpellier, UMR 5149 CNRS, Universit\'e Montpellier 2, Place Eug\`ene Bataillon, CC 051
34095 Montpellier, France. Emails:\, {\sffamily azerad@math.univ-montp2.fr}, {\sffamily bouharg@math.univ-montp2.fr}
The authors are supported by the ANR MATHOCEAN ANR-08-BLAN-0301-02. 
} 
}
\date{\today}

\begin{document}
\maketitle

\begin{abstract}
A class of finite difference schemes for solving a fractional anti-diffusive equation, recently proposed by Andrew C. Fowler to describe the dynamics of dunes, is considered. Their linear stability is analyzed using the standard Von Neumann analysis: stability criteria are found and checked numerically. Moreover, we investigate the consistency and convergence of these schemes. \\ 

\end{abstract}

\noindent \textbf{Keywords:} Anti-diffusive fractional operator, Finite difference approximations, Von Neumann stability analysis, Error analysis.  \\

\noindent \textbf{Mathematics Subject Classification:} 35L65, 45K05, 65M12, 65M06.

\section{Introduction}

Partial Differential Equations with nonlocal or fractional operators are widely used to model scientific problems in mechanics, physics, signal processing, see for example \cite{signal} and references therein. We consider in this chapter a nonlocal conservation law which appears in the formation and dynamics of sand structures such as dunes and ripples \cite{fowler2,lagree}. 
Since it is generally impossible to obtain analytical solutions of these nonlocal models, one must rely on numerical solutions. In the last few decades, significant advances in numerical analysis and computational implementation of numerical methods for nonlocal/fractional PDEs have been made. For instance, \cite{JD2} 
 propose a finite volume method to approximate the solutions of a fractal scalar conservation law, that is to say a conservation law regularized by a \emph{diffusive} fractional power of the Laplacian operator and \cite{meerschaert,yuste} use finite difference methods to approximate fractional diffusive equations. \\
In this chapter, we develop the basic numerical analysis of the following evolution equation proposed by A.C. Fowler (see \cite{fowler2}, \cite{fowler3} and \cite{fowler1} for more details) to study the nonlinear dune formation:
\begin{equation}
\partial_t u(t,x) + \partial_x\left(\frac{u^2}{2}\right) (t,x) + \eta \, \I [u(t,\cdot)] (x)
- \epsilon \, \partial_{xx}^2 u(t,x) = 0, 
\label{fowlereqn4}
\end{equation}
where $u=u(t,x)$ represents the dune height and $\I$
is a nonlocal operator defined as follows: for any Schwartz
function $\varphi \in \SR$ and any $x \in \mathbb{R}$,
\begin{equation}
\I [\varphi] (x) := \int_{0}^{+\infty} |\xi|^{-\frac{1}{3}}
\varphi''(x-\xi) \, d\xi . \label{nonlocalterm3b}
\end{equation} 
The second and fourth terms of equation \eqref{fowlereqn4} correspond to the nonlinear and dissipative terms respectively, while the third term is the nonlocal term, which is anti-dissipative as we will show later on. The positive parameters $\epsilon$ (resp. $\eta$)  quantify the amount of local diffusion (resp. nonlocal anti-diffusion).

\begin{remark}
For causal functions (i.e. $\varphi(x) = 0 $ for $x<0$), this operator is, up to a multiplicative constant, the Riemann-Liouville integral which is defined as follows:
\begin{equation}
\frac{1}{\Gamma(\frac{2}{3})} \int_0^{+\infty} \frac{\varphi^{''}(x-\xi)}{|\xi|^{1/3}} d\xi= \frac{d^{-2/3}}{d x^{-2/3}} \varphi''(x) = 
\frac{d^{4/3}}{d x^{4/3}} \varphi(x),
\label{riemann}
\end{equation}
with $\Gamma$ the Euler function. 
\end{remark}
Many numerical methods for the evaluation of fractional order integrals and the solution of fractional order equations are proposed in the literature. Usually, time and spatial fractional derivatives are considered: we refer for instance to \cite{Diet,li, yuste}. \\
In our case, the integral operator $\I$ can be seen as a fractional power of order $2/3$ of the Laplacian with the bad sign. Indeed, it has been proved that $\I $ has the following Fourier transform \cite{alibaud}: 
\begin{equation}
\F(\I[\varphi]) (\xi) = \psi_{\I}(\xi) \F \varphi(\xi),
\label{fourier3bis}
\end{equation}
where $\psi_{\I}(\xi)=-a_{\I} |\xi|^{\frac{4}{3}}+ i b_{\I} \xi |\xi|^{\frac{1}{3}} $ with $a_{\I} = 2 \, \pi^2 \, \Gamma(\frac{2}{3})$, $b_{\I} = 2 \, \pi^2 \, \sqrt{3} \, \Gamma(\frac{2}{3}) $ and $\F$ denotes the Fourier transform defined for $f \in L^1(\R)$ by: for all $\xi \in \R$
\begin{equation*}
\F f(\xi) = \int_\R e^{-2i \pi x \xi } f(x) \, dx.
\end{equation*} 
Formula \eqref{fourier3bis} stems from the following integral formula \cite{alibaud}:
\begin{equation}
\I [\varphi](x)=  \frac{4}{9}\int_{-\infty}^{0}
\frac{\varphi(x+z)-\varphi(x)-\varphi'(x) z}{|z|^{7/3}} \: 
dz.\label{intformula}
\end{equation}
Finally, equation \eqref{fowlereqn4} involves two antagonistic terms: the anti-diffusive operator $\I$ which creates instabilities and the diffusion operator $-\partial_{xx}^2$ which controls these perturbations. \\
Recently, some theoretical results regarding the Fowler model \eqref{fowlereqn4}  have been obtained, namely, existence of travelling-waves, the global well-posedness, the failure of the maximum principle and the instability of constant solutions \cite{alibaud, alvarez, afaf}. The last two results are a consequence of the non-positivity of the kernel $K$ of $\I-\partial_{xx}^2$ defined for  $t > 0$ and $x \in \R$ by
\begin{equation}
K(t, \cdot)(x):= \F^{-1}(e^{-t(4 \pi^2 |\cdot|^2 + \psi_\I(\cdot)) })(x).
\label{kernel4}
\end{equation}
These two ``bad properties'' show that the discrete problem must be handled with care. Indeed, for monotone models, a classical way to get numerical stability criteria for explicit scheme is to ensure that the approximated problem satisfies the discrete maximum principle, which cannot be true for Equation \eqref{fowlereqn4}. \\
In \cite{alibaud}, some numerical results regarding this equation have been obtained using an explicit finite difference scheme but the detailed numerical study was not performed. Hence, in this chapter, we would like to go one step further investigating the numerical stability, consistency and convergence of a class of explicit finite difference schemes approximating the Fowler equation. \\
The numerical stability is specially interesting here because the growth of the solution depends on frequencies and time. Hence, the notion of $A$-stability, also called strong stability, is not suitable nor desirable. In the literature, some authors use another definition of stability, less restrictive than the $A$-stability: the $C$-stability. This is an abbreviation for convergence stability and is linked with stability in the Lax-Richtmyer sense. In this definition, a numerical scheme is stable for the norm $||\cdot||$ if for all $T>0$, there exists a constant $K(T)>0$ independent of the time and space steps $\delta x,\delta t$ such that for all initial data $u_0$   
\begin{equation*}
||u^n|| \leq K(T) ||u_0||, \hspace{0.5 cm} \forall \, 0 \leq n \leq \frac{T}{\delta t},
\end{equation*}
where $u^n$ represents the approximated solution at the time $t_n=n \delta t$. This definition allows the solution to grow with time, which is the case for example for the equation $u_t-u_{xx}=cu$. 
For the $L^2$-stability, a simple way to prove the numerical stability and specially to get stability criteria is  Fourier analysis, see Section \ref{stability}. Hence, considering the $C$-stability, the Von Neumann condition is written as
\begin{eqnarray}
&\,& \exists C >0, \exists \delta t^*>0, \mbox{ such that } \forall \delta t \in ]0,\delta t^*]; \forall k \in \mathbb{Z} \nonumber \\
&\,& \hspace{2.5 cm} |g(k)| \leq 1 + C \delta t, 
\label{vonNeu}
\end{eqnarray}
where $g$ is the discrete amplification factor, $k$ the wave number and $C$ is a positive constant independent of $\delta x$ and $\delta t$. If $C=0$, the Von Neumann condition coincides with the $A$-stability. \\ 
As we will see later in Section \ref{preliminaire}, the amplification of solutions of the Fowler equation also depends on frequencies: low frequencies are slowly amplified whereas the high frequencies are dampened. 
Hence, the notion of $C$-stability is not adapted for this model because it considers only the amplification due to time.
To take into account this phenomenon, the ``constant'' $C$ introduced in the Von Neumann condition \eqref{vonNeu} should also depend on the space step in order to be able to control the amplification w.r.t. different frequencies and this is not possible for a constant, by definition. 
Since high frequencies are usually responsible of numerical instabilities, we are going to focus our attention on them. Thereafter, the idea is to exhibit numerical stability conditions to ensure the validity of simulations. We then seek numerical stability criteria such that the amplification factor satisfies:
\begin{equation}
 \forall |k| \geq k_0, |g(k)| \leq 1,
\end{equation}
where $k_0$ is some threshold frequency.
To ensure this inequality, we will exhibit two sufficient conditions. The first one is rather unusual: it imposes to the space step $\delta x$ to be smaller than a given positive constant which depends on  the ratio 
$\epsilon/\eta$
of local  diffusion  to  non-local  anti-diffusion. We will in fact check numerically that this condition is not necessary. The second one looks more familiar. 
It is a classical CFL-type condition modified by a $\eta \, \delta t/\delta x^{4/3}$ term, which stems from the nonlocal operator. We will see in the numerical simulations that this condition is both necessary and sufficient to ensure numerical stability. \\
For a comprehensive study, we also carry out an error analysis:  we compute the truncation and phase errors of several finite difference schemes. 
We finally investigate the convergence of these schemes.  \\

The remaining of this chapter is organized as follows: in the next section, we present finite difference schemes with some discrete version for the fractional derivative and we study the continuous amplification factor of the linearized Fowler model. Sections \ref{stability} and \ref{erreur} are, respectively, devoted to the stability and error analysis. 
The paper ends with some remarks in section \ref{conclusion4}. 

\section{Preliminaries \label{preliminaire}}

\subsection{Finite difference approximations}

The spatial discretization is given by a set of points ${x_{j}; j=0,...,N}$ and the discretization in time is represented by a sequence of times $t^0=0<...<t^n<...<T$. For the sake of simplicity we will assume constant step sizes $\delta x$ and $\delta t$ in space and time respectively. The discrete solution at a point will be represented by $u^n_j \approx u(t^n,x_j)$. 
The schemes consist in computing approximate values $u^n_j$ of solution to \eqref{fowlereqn4} on $[n\delta t, (n+1) \delta t[\times [j \delta x, (j+1) \delta x[$ for $n \in \mathbb{N}$ and $j \in \mathbb{N}$ thanks to the following relation:
\begin{equation}
\label{FDscheme}
\frac{u_{j}^{n+1}-u_{j}^{n}}{\delta t}+ F(u^{n}_{j-1}, u^n_{j}, u^n_{j+1})- \epsilon \, \frac{u_{j+1}^{n}-2u_{j}^{n}+u_{j-1}^{n}}{\delta x^{2}}+ \eta \, \I_{\delta x}[u^{n}]_{j}=0,
\end{equation} 
where $\I_{\delta x}$ and $F$ are, respectively, the discretizations of the nonlocal and nonlinear terms. Note that the Laplacian term is discretized using centred finite difference approximation.
We begin by considering two discretizations $\I_{\delta x}^{1}, \I_{\delta x}^2$ for the operator $\I$ corresponding to formulae \eqref{nonlocalterm3b}  and  \eqref{intformula}, respectively. In both cases, we use a basic quadrature rule on the mesh $\left([j \delta x, (j+1)\delta x [  \right)_{j \in \mathbb{N}}$ to approximate each integral and we use a finite difference approximation of the derivative:
\begin{equation} \label{discretization14}
\I^{1}_{\delta x}[\varphi]_{j}=\delta x^{-4/3} \sum^{+ \infty}_{l=1} l^{-1/3} \left(\varphi_{j-l+1}-2\varphi_{j-l} + \varphi_{j-l-1}\right),
\end{equation}
\begin{equation}\label{discretization24}
\I^{2}_{\delta x}[\varphi]_{j} = \frac{4}{9} \delta x^{-4/3} \sum^{+ \infty}_{l=1} l^{-7/3} \left(\varphi_{j-l}-\varphi_{j}+ l \, \left( \frac{\varphi_{j+1}-\varphi_{j-1}}{2}\right)   \right).
\end{equation}
Let us remark that we begin the sums at $l=1$ in order to avoid the singularity of $1/|z|^{1/3}$ and $1/|z|^{7/3}$ at $z = 0$. We will comment later on the truncation of the series, see Section \ref{erreur}. Let us simply note that if $\varphi_j = 0$ for all $j<0$ then the series \eqref{discretization14} is in fact a finite sum.
Since the spatial mesh is given by $\left([j \delta x, (j+1)\delta x [  \right)_{j \in \mathbb{N}}$, we will indeed assume that $\varphi_j = 0$ for all $j<0$. 

\begin{remark}
Using fractional calculus, we could also consider, for any causal function $\varphi$, the standard Gr\"unwald-Letnikov formula for the fractional derivative $\I$. Indeed, using the expression \eqref{riemann}, $\I$ can be approximated by the following two formulae
\begin{eqnarray}
\I^{3}_{\delta x}[\varphi]_j & = & \frac{\Gamma(2/3) }{\delta x^{4/3} } \sum_{l \geq 0} (-1)^l \begin{pmatrix} 4/3 \\
                         l \\
\end{pmatrix} \varphi_{j-l} = \frac{\Gamma(2/3) }{\delta x^{4/3} } \sum_{l \geq 0} \frac{\Gamma(l-4/3)}{\Gamma(l+1)\Gamma(-4/3)} \varphi_{j-l}, 
\label{grunwald}
\end{eqnarray}
and
\begin{eqnarray}
\I^{3}_{\delta x}[\varphi]_j & = & \frac{\Gamma(2/3) }{\delta x^{4/3}} \sum_{l \geq 0}  \begin{bmatrix} -2/3 \\
                         l \\
\end{bmatrix} ( \varphi_{j-l+1}-2\varphi_{j-l}+ \varphi_{j-l-1}),
\end{eqnarray}
where, for all $\alpha >0$ and $k\in \mathbb{N}$  we denote by $\displaystyle {\alpha \choose k}$ the binomial coefficient defined by
$$ \displaystyle {\alpha \choose k}:= \frac{\alpha(\alpha-1) \dots (\alpha - k +1)}{k!} = (-1)^k \frac{\Gamma(k-\alpha)}{\Gamma(-\alpha) \Gamma(k+1)}$$ 
and $\begin{bmatrix} p \\
                         k \\
\end{bmatrix}$ denotes the negative binomial given by
\begin{equation*}
\begin{bmatrix} p \\
                         k \\
\end{bmatrix} = \frac{p(p+1) \cdots  (p+k-1)}{k!}= (-1)^k { -p \choose k}. 
\end{equation*}
For more details about Gr\"unwald-Letnikov derivatives, we refer the reader to the book \cite{pod}.
\end{remark} 

To analyze the stability of the discrete problem \eqref{FDscheme} using Fourier analysis, we investigate the following linearized explicit scheme
\begin{eqnarray}
\label{FDlinear}
\frac{u_{j}^{n+1}-u_{j}^{n}}{\delta t} + v \, \frac{ u^n_{j}-u^n_{j-1}}{\delta x}    -  \epsilon \, \frac{u^{n}_{j+1}-2u^{n}_{j}+u^{n}_{j-1} }{\delta x^{2}}  + \eta \, \I_{\delta x}[u^{n}]_{j}=0, 
\end{eqnarray} 
where $v$ is a positive constant. 
\begin{remark}
In the case where we consider that $v$ is a non-positive constant, $\partial_x u$ is discretized using a downstream finite difference approximation and so $F$ is given by
\begin{equation*}
F(u^n_{j-1},u^n_{j}, u^n_{j+1}) = v \frac{u^n_{j+1}-u^n_j }{\delta x}.
\end{equation*}
\end{remark}

Therefore, taking into account the discretization \eqref{discretization14}, the numerical scheme is written as follows:
\begin{eqnarray}
u^{n+1}_j&=& \frac{\epsilon \, \delta t}{\delta x^2}  u^n_{j+1} +  \left(1-\frac{v \delta t}{\delta x}-2\frac{\epsilon \, \delta t}{\delta x^2} \right) u^n_j  +\left( \frac{v \, \delta t}{\delta x} + \frac{\epsilon  \, \delta t}{\delta x^2} \right) u^n_{j-1} \nonumber \\
&-& \frac{\eta \, \delta t}{\delta x^{4/3}} \sum^{+ \infty}_{l=1} l^{-1/3} \left(u^n_{j-l+1}-2 u^n_{j-l} + u^n_{j-l-1}\right),
\label{schema1}
\end{eqnarray}
and since
\begin{eqnarray*}
\sum^{+ \infty}_{l=1} l^{-1/3} \left(u^n_{j-l+1}-2 u^n_{j-l} + u^n_{j-l-1}\right) &=& \sum_{l=2}^{+ \infty}
\left[(l+1)^{-1/3}-2l^{-1/3}+(l-1)^{-1/3}\right] u^{n}_{j-l} \\ 
&-& \, u^n_j - (2-2^{-1/3})u^n_{j-1},
\end{eqnarray*}
the numerical scheme \eqref{schema1} reads
\begin{eqnarray}
u^{n+1}_j&=&  \frac{\epsilon \, \delta t}{\delta x^2} \, u^{n}_{j+1}+ \left(1-\frac{v \, \delta t}{\delta x}-2\frac{\epsilon \, \delta t}{\delta x^2}-\frac{ \eta \, \delta t}{\delta x^{4/3}}\right) \, u^{n}_{j}+ \left(\frac{v \, \delta t}{\delta x}+ \frac{\epsilon \, \delta t}{\delta x^2} + (2-2^{-1/3})\frac{\eta  \, \delta t}{\delta x^{4/3}}\right) \, u^{n}_{j-1} \nonumber \\
&-&   \frac{\eta \, \delta t}{\delta x^{4/3}} \sum_{l=2}^{+ \infty}
\left[(l+1)^{-1/3}-2l^{-1/3}+(l-1)^{-1/3}\right] u^{n}_{j-l}.
\label{schema1b}
\end{eqnarray}

Considering now the discretization \eqref{discretization24}, the numerical scheme \eqref{FDlinear} can be written as follows:
\begin{eqnarray*}
u^{n+1}_j & = &  \frac{\epsilon \, \delta t}{\delta x^2}u^{n}_{j+1} + (1- \frac{v  \, \delta t}{\delta x}- 2  \frac{\epsilon \, \delta t}{\delta x^2})u^{n}_j + ( \frac{v \, \delta t}{\delta x} + \frac{\epsilon \, \delta t}{\delta x^2}) u^n_{j-1}\\ 
&-& \frac{4}{9} \frac{\eta \, \delta t}{\delta x^{4/3}}
 \sum^{+ \infty}_{l=1} l^{-7/3} \left(u^n_{j-l}-u^n_{j}+  l \left( \frac{u^n_{j+1}-u^n_{j-1}}{2} \right)   \right).
\end{eqnarray*}
Recall that the Riemann zeta function, for $\mbox{Re}(s)>1$
 $$\zeta(s) = \sum_{n =1}^{\infty} n^{-s}. $$ 
 Since
\begin{eqnarray*}
 \sum^{+ \infty}_{l=1} l^{-7/3} \left(u^n_{j-l}-u^n_{j}+ l \left( \frac{u^n_{j+1}-u^n_{j-1}}{2} \right) \right) &=&  \frac{1}{2} \zeta(\frac{4}{3}) \, u^n_{j+1} - \zeta(\frac{7}{3}) \, u^n_j \\ 
 &-& \left( \frac{1}{2} \zeta(\frac{4}{3}) -  1  \right)u^n_{j-1} + \sum_{l=2}^{+\infty} l^{-7/3} \, u^n_{j-l}, 
\end{eqnarray*}
with $\zeta(\frac{4}{3}) \approx 3.601$, $\zeta(\frac{7}{3}) \approx 1.415 $, 
the numerical scheme reads
\begin{eqnarray}
u^{n+1}_j&=& \left(  \frac{ \epsilon \, \delta t}{\delta x^2}-  \frac{4}{9} \frac{\eta \, \delta t}{\delta x^{4/3}} \frac{1}{2} \zeta(\frac{4}{3})  \right) u^{n}_{j+1}+ \left(1-\frac{v \, \delta t}{\delta x}- 2 \frac{\epsilon \, \delta t}{\delta x^2} +  \frac{4}{9} \frac{\eta \,  \delta t  }{\delta x^{4/3}} \zeta(\frac{7}{3})  \right)u^n_j \nonumber \\ 
&+& \left(\frac{v \, \delta t}{\delta x}+ \frac{\epsilon \, \delta t}{\delta x^2} + \frac{4}{9} \frac{\eta \, \delta t  }{\delta x^{4/3}}(\frac{1}{2} \zeta(\frac{4}{3})-1)\right) \, u^{n}_{j-1}-  \frac{4}{9} \frac{\eta \,\delta t}{\delta x^{4/3}} \sum_{l=2}^{+\infty} l^{-7/3} \, u^{n}_{j-l}. \label{chepas0}
\end{eqnarray}

\begin{remark}
If the Fowler equation \eqref{fowlereqn4} satisfied the maximum principle, a classical way to get sufficient conditions for the $L^\infty$-stability of the scheme would be to ensure that $u^{n+1}_j$ is a convex
combination of $(u^n_j)_{j \in \mathbb{N}}$. Though one can easily check that all coefficients sum up to 1, we remark that $(l+1)^{-1/3}-2l^{-1/3}+(l-1)^{-1/3}>0 $ because the function $x \rightarrow x^{-1/3}$ is convex and $-\frac{4}{9}   \frac{\eta \, \delta t}{\delta x^{4/3}} l^{-7/3} <0$  for all $l>1$. Thus, $u^{n+1}_j$ is not a convex
combination of $(u^n_j)_{j \in \mathbb{N}}$. 
To get conditions of numerical stability we have to rely on the Von Neumann method. 
\end{remark}

\subsection{The continuous amplification factor}

In this section, we are going to study the amplification factor of the following equation
\begin{equation}
\partial_{t} u(t,x) + v \, \partial_{x} u(t,x) -\epsilon \, \partial^2_{xx} u(t,x) + \eta \, \I[u(t, \cdot)](x) = 0.
\label{linearized}
\end{equation}   
Then, $u(t,x) = e^{i k x + \sigma t}$ is a solution to \eqref{linearized} if and only if the following dispersion relation is satisfied
\begin{equation*}
\sigma+ i vk+\epsilon k^2- \eta |k|^{4/3}\frac{1}{2} \Gamma(\frac{2}{3}) \left( 1-i \sqrt{3} \, sign(k) \right) =0,
\end{equation*}
where $k \in \R$ and $\sigma \in \mathbb{C}$. Indeed, we have 
$u_t(t,x) = \sigma u(t,x), u_x(t,x) =  iku(t,x), u_{xx}(t,x) =  -k^2u(t,x)$
and
\begin{eqnarray*}
\I[u(t, \cdot)](x) &=&  \int_{0}^{+\infty} |\xi|^{-1/3} (-k^2) e^{ik(x-\xi)+\sigma t} \, d\xi, \\
&=& - k^2 u(t,x) \int_{0}^{+\infty} |\xi|^{-1/3} e^{-i k \xi} \, d\xi, \\
&=& - k^2 u(t,x) \left[ \int_{0}^{+\infty} |\xi|^{-1/3} \cos(k \xi) \, d\xi + i \int_{0}^{+\infty} |\xi|^{-1/3} \sin(k \xi) \, d\xi \right], \\
&=&  \left[ - |k|^{4/3} \frac{1}{2}\Gamma(\frac{2}{3}) + k |k|^{1/3} \frac{\sqrt{3}}{2} \Gamma(\frac{2}{3})\right]  u(t,x),
\end{eqnarray*}
where we have used Fresnel integrals. \\
Hence
the multiplicative factor which enables to get the solution at the time $t_{n+1}$ from the solution at the time $t_n$ is
\begin{equation}
 G_{cont}(k) = e^{-\delta t \, \phi(k)},
 \label{exactfactor}
\end{equation} 
where $\phi(k)= \epsilon \, k^{2} - \eta \frac{1}{2} \Gamma(\frac{2}{3}) \, \vert k \vert^{4/3} + i  \left( \eta \frac{\sqrt{3}}{2} \Gamma(\frac{2}{3}) \,  k \vert k \vert^{1/3} +  \, v  k \right) $. Therefore  
\begin{equation*}
| G_{cont}(k)| = e^{-\delta t    \left( \epsilon \, k^{2} - \eta \frac{1}{2} \Gamma(\frac{2}{3}) \, \vert k \vert^{4/3} \right) }.
\end{equation*}

\begin{figure}[h!]
	\centering
	\includegraphics[scale=0.85]{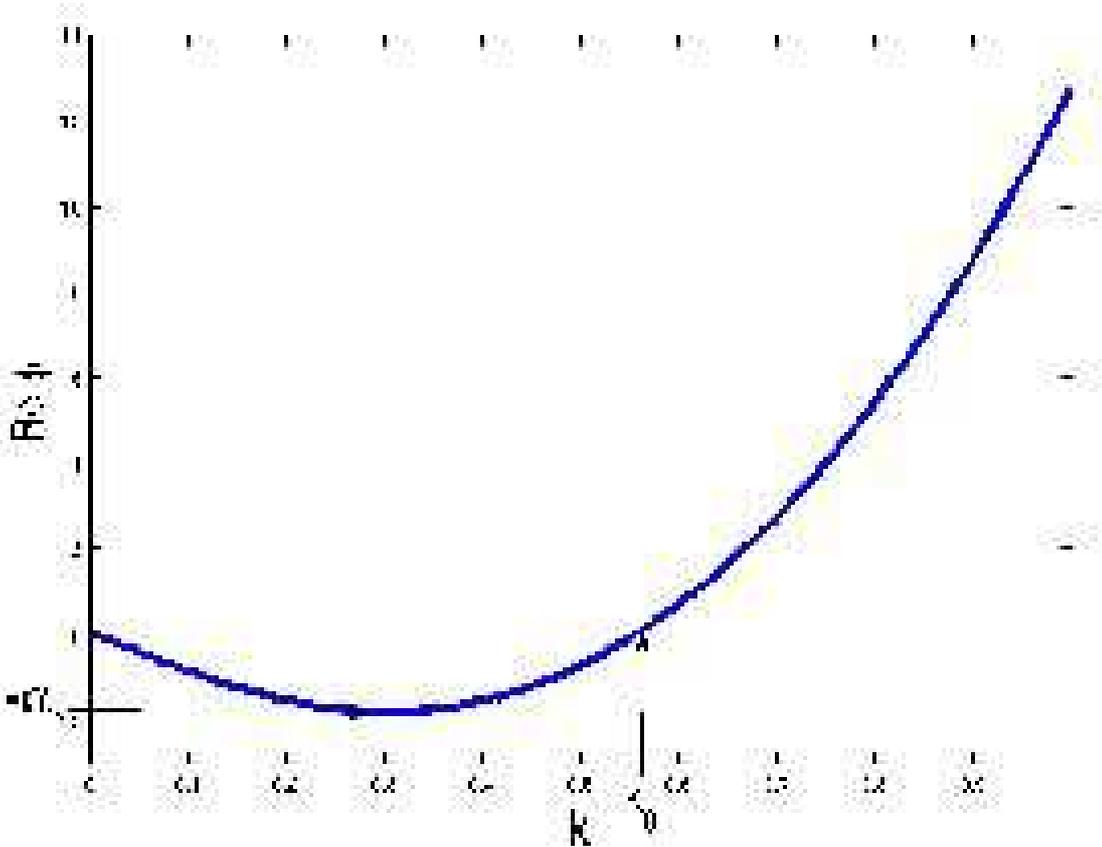} 
	\caption{Behaviour of $ \mbox{Re}\left(  \phi \right)   $ for $\eta, \epsilon$ fixed. $k_0 = \left( \frac{1}{2}  \Gamma\left(\frac{2}{3} \right) \frac{\eta}{\epsilon} \right)^{3/2} $ is the threshold frequency.
}
\label{phiencore}
\end{figure} 
Figure \ref{phiencore} shows that the modulus of the continuous amplification factor during one time step is controlled by  $e^{\alpha_* \delta t}$, with $\alpha_* := - \min \mbox{Re} \left( \phi\right)  = -\mbox{ Re } \phi(k^*)=  \frac{4}{27} \left( \frac{1}{2} \Gamma\left( \frac{2}{3}\right) \right)^{3} \frac{\eta^3}{\epsilon^2}$, where 
\begin{equation*}
k_* = \left(\frac{1}{3} \Gamma\left( \frac{2}{3}\right) \frac{\eta}{ \epsilon} \right)^{3/2}.
\end{equation*} 
Thereby, the exact continuous amplification is maximum for frequency $k_*$, and its modulus is bigger than 1 only for frequencies in the range $(0, k_0]$. 
The magnitude of this amplification during one time step will also be proportional to   $\delta t$. Obviously, this phenomenon affects only the low frequencies in the range $(0, k_0]$ and strongly depends on the choice of parameters $\eta$ and $\epsilon.$ This is why the standard definitions of stability are not adapted for this model because  they do not take into account the possibility of amplification of certain frequencies.  \\
And since high frequencies are usually
responsible of numerical instabilities, we are going to focus our attention on the high frequencies which are quickly dampened in Fowler's continuous model in order to exhibit numerical stability conditions. 



\section{Stability analysis \label{stability}}

The purpose of this section is to study the numerical stability of schemes introduced in the previous section and to exhibit stability criteria. We recall that 
the numerical stability enables to ensure that the difference between the approximated solution and the exact solution remains bounded for all $T >0$ with $\delta x, \delta t$ given. To get numerical stability criteria, we consider the Von Neumann or Fourier method. In this approach, we assume that the discrete solution is written in as a single Fourier mode
\begin{equation} \label{mode}
u_{j}^{n}=\hat{u}^{n}_{k} e^{ikx_{j}},
\end{equation} 
where $k \in \mathbb{Z}$ is the wave number.
Injecting \eqref{mode} in the numerical scheme \eqref{FDlinear}, we get
\begin{equation}  \label{fourierdiscret}
\hat{u}^{n+1}_{k}=g(\delta x, \delta t, k) \hat{u}^{n}_{k},
\end{equation}
where $g$ is the discrete amplification factor. 
In what follows, for simplicity, we denote indifferently
$g(\delta x, \delta t, k)= g(\delta x, \delta t, \theta)$, where $\theta = k \delta x$.\\

\begin{remark}
Note that due to the aliasing phenomenon it is enough to study the discrete amplification factor for $\theta \in [0,\pi]$.\\
\end{remark}

Following the previous discussion concerning the notion of numerical stability (see Section \ref{preliminaire}), we introduce the following definition:

\bigskip 
 
\begin{definition}
\label{defstab}
We say that a numerical scheme which approximates the linearized Fowler equation problem is stable if the high frequencies are strongly stable that is to say:
$$
\exists \, 0<\theta_0 < \pi \mbox{ such that } \forall \theta \in (\theta_0, \pi], |g(\delta x, \delta t, \theta)|<1,
$$
where $g$ is the discrete amplification factor.
\label{definitionstability} 
\end{definition}

\bigskip 

\begin{lemme} \label{cercleremark} Let $a,b \in \R$ and $d \in \R^+$. Then we have 
$$\forall \theta \in [0, 2 \pi], |a + b e^{- i\theta}| \leq d \hspace{0.2 cm} \mbox{ if and only if } \hspace{0.2 cm}  a + |b| \leq d \mbox { and } a - |b| \geq -d.$$   
\end{lemme}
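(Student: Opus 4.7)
The plan is to reduce the bound $|a+be^{-i\theta}|\le d$ uniformly in $\theta$ to a computation of the maximum modulus of the affine expression $a+be^{-i\theta}$, and then recognize that maximum as $|a|+|b|$, which the two inequalities in the lemma exactly encode.

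First I would expand
\begin{equation*}
|a+be^{-i\theta}|^{2}=(a+b\cos\theta)^{2}+b^{2}\sin^{2}\theta=a^{2}+b^{2}+2ab\cos\theta,
\end{equation*}
using $e^{-i\theta}=\cos\theta-i\sin\theta$ and $\cos^{2}\theta+\sin^{2}\theta=1$. The right-hand side is an affine function of $\cos\theta\in[-1,1]$, so its maximum over $\theta\in[0,2\pi]$ is attained at $\cos\theta=\mathrm{sign}(ab)$ (take $\cos\theta=1$ if $ab=0$), yielding $a^{2}+b^{2}+2|ab|=(|a|+|b|)^{2}$. Consequently
\begin{equation*}
\max_{\theta\in[0,2\pi]}|a+be^{-i\theta}|=|a|+|b|,
\end{equation*}
and the uniform bound $|a+be^{-i\theta}|\le d$ for all $\theta$ is equivalent to the single scalar inequality $|a|+|b|\le d$.

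It remains to observe that $|a|+|b|\le d$ is equivalent to the conjunction $a+|b|\le d$ and $a-|b|\ge -d$. Indeed, the latter pair rewrites as $-(d-|b|)\le a\le d-|b|$, i.e.\ $|a|\le d-|b|$, which (since $d\ge 0$) is the same as $|a|+|b|\le d$. Conversely $|a|+|b|\le d$ gives both $a\le|a|\le d-|b|$ and $-a\le|a|\le d-|b|$, that is, the two stated inequalities. This chain of equivalences completes the proof.

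I do not expect any serious obstacle: the lemma is essentially the computation of the maximum modulus of a sinusoid on the unit circle, and the only mild care needed is handling the sign of $ab$ so that the maximum is correctly identified as $(|a|+|b|)^{2}$ rather than $(a+b)^{2}$.
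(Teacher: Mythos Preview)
Your proof is correct. The paper's own proof is purely geometric: it simply observes, via a figure, that the locus $\{a+be^{-i\theta}:\theta\in[0,2\pi]\}$ is the circle of center $a$ and radius $|b|$ on the real axis, and this circle lies inside the closed disk of radius $d$ centered at $0$ precisely when its two real-axis intersection points $a\pm|b|$ lie in $[-d,d]$, which are the two stated inequalities. Your argument reaches the identical conclusion by an explicit algebraic computation of $\max_{\theta}|a+be^{-i\theta}|^{2}=(|a|+|b|)^{2}$ followed by the equivalence $|a|+|b|\le d \Leftrightarrow (a+|b|\le d \text{ and } a-|b|\ge -d)$. The underlying idea is the same; your version is more self-contained and does not rely on a picture, while the paper's geometric phrasing makes the ``circle inside a disk'' interpretation immediate.
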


\begin{proof}
We can easily check this property, see Figure \ref{cercle}. \\
\begin{figure}[h!]
	\centering
	\includegraphics[scale=0.65]{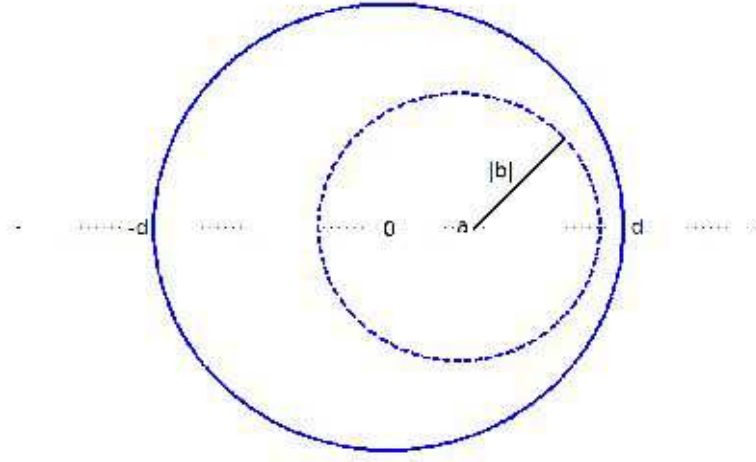} 
	\caption{Dashed circle (resp. continuous circle) is centred at $a$ (resp. 0) and of radius $|b|$ (resp. $d$).}
\label{cercle}
\end{figure} 
\end{proof}

\begin{proposition}
The finite difference scheme \eqref{FDlinear} is stable in the sense of Definition \ref{definitionstability} if $\delta x$ and $ \delta t $ satisfy the following conditions:
\begin{itemize}
\item For $\I_{\delta x}^1$: 
\begin{equation}
 \frac{ v \, \delta t}{\delta x}+ 2 \frac{ \epsilon \, \delta t}{\delta x^2}+ (2-2^{-1/3})  \frac{\eta \, \delta t}{\delta x^{4/3}} \leq 1,
 \label{conditionstab01}
\end{equation}
\item  For $ \I_{\delta x}^{2} $: 
\begin{equation}
\frac{v\, \delta t}{\delta x}+ 2 \frac{\epsilon \, \delta t}{\delta x^2} +  \frac{4}{9} \left(\zeta(\frac{4}{3}) -1 \right) \frac{\eta \, \delta t}{\delta x^{4/3}}  \leq 1,
\label{conditionstab02}
\end{equation}
\end{itemize}
and if moreover, the space-step $\delta x$ is small enough in order that 
\begin{itemize}
\item 
For $\I_{\delta x}^1$: 
\begin{equation}
(1-2^{-1/3}) \frac{\eta \, \delta t}{\delta x^{4/3}} \leq  2\frac{ \epsilon \, \delta t}{\delta x^2} \sin^{2}(\frac{\theta_0}{2}),
\label{condbiz1}
\end{equation}

\item  
For $ \I_{\delta x}^{2} $: 
\begin{equation}
 \frac{4}{9} \left( \zeta(\frac{7}{3})-1 + \zeta(\frac{4}{3}) \right) \frac{\eta \, \delta t}{\delta x^{4/3}}  \leq 2  \frac{\epsilon \, \delta t}{\delta x^2} \sin^{2}(\frac{\theta_0}{2}),
 \label{condbiz2}
 \end{equation}

\end{itemize}
where $\theta_0$ designates the stability threshold frequency. 
\end{proposition}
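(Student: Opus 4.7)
\medskip
\noindent\textbf{Proof plan.}
The plan is to apply the Von Neumann method: inject the Fourier mode $u^n_j = \hat u^n_k\, e^{ikx_j}$, $\theta := k\delta x$, into the explicit formulas \eqref{schema1b} and \eqref{chepas0} and read off the discrete amplification factor. For scheme \eqref{schema1b} one finds
\begin{equation*}
g(\theta) \;=\; A_{-1}\, e^{i\theta} + A_0 + A_1\, e^{-i\theta} + \sum_{l\ge 2} A_l\, e^{-il\theta},
\end{equation*}
where $A_{-1}, A_0, A_1$ are the real coefficients appearing in front of $u^n_{j+1}, u^n_j, u^n_{j-1}$ respectively in \eqref{schema1b}, and where the tail weights are negative: $A_l = -(\eta\delta t/\delta x^{4/3})\,c_l$ with $c_l := (l+1)^{-1/3}-2l^{-1/3}+(l-1)^{-1/3} > 0$ (by convexity of $x\mapsto x^{-1/3}$). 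The telescoping identity
\begin{equation*}
\sum_{l\ge 2} c_l \;=\; 1 - 2^{-1/3}
\end{equation*}
(already implicitly used to pass from \eqref{schema1} to \eqref{schema1b}) makes the tail absolutely summable with $\ell^1$-norm $\tau := (1-2^{-1/3})\,\eta\delta t/\delta x^{4/3}$. The analogous reading of \eqref{chepas0} for scheme 2 gives tail weights $A_l = -\tfrac{4}{9}(\eta\delta t/\delta x^{4/3})\,l^{-7/3}$, absolutely summable from the start, and $\tau$ replaced by its $\zeta$-analogue. The scheme's mass identity then yields $A_{-1}+A_0+A_1 = 1+\tau$ in both cases.

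The core estimate will combine the triangle inequality,
\begin{equation*}
|g(\theta)| \;\le\; \bigl|A_{-1}e^{i\theta}+A_0+A_1 e^{-i\theta}\bigr| + \tau,
\end{equation*}
with Lemma \ref{cercleremark} applied to the three-term head, in the explicit quantitative form
\begin{equation*}
\bigl|A_{-1}e^{i\theta}+A_0+A_1 e^{-i\theta}\bigr|^2 = (1+\tau)^2 - 4\sin^2(\theta/2)\bigl[A_0(A_{-1}+A_1)+4A_{-1}A_1\cos^2(\theta/2)\bigr].
\end{equation*}
The CFL conditions \eqref{conditionstab01}/\eqref{conditionstab02} are used to guarantee $A_{-1}, A_0, A_1 \ge 0$ (the hypothesis of Lemma \ref{cercleremark}) and to produce $A_{-1}+A_0+A_1 = 1+\tau$, so that the reduction $|g(\theta)|\le 1$ amounts to asking the bracketed diffusive correction to be at least $\tau$. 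An algebraic check—exploiting the CFL-derived lower bound $A_0(A_{-1}+A_1)\ge\tau$—then shows that the extra space-step condition \eqref{condbiz1} (resp.\ \eqref{condbiz2}), combined with $\sin^2(\theta/2)\ge\sin^2(\theta_0/2)$ for $\theta\in(\theta_0,\pi]$, is precisely what is needed to close the estimate, yielding $|g(\theta)|<1$ on the high-frequency range.

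The main obstacle, as stressed in the remark preceding the proposition, is that the total $\ell^1$-mass of the coefficients is $1+2\tau>1$, so no convex-combination argument for $L^\infty$-stability is available. The proof circumvents this by playing the discrete Laplacian's $\sin^2(\theta/2)$-gain against the anti-diffusive tail of $\I$---a manoeuvre available only above a threshold frequency $\theta_0$, whence the somewhat unusual smallness condition on $\delta x$. For scheme~2 the same scheme of proof applies verbatim, with the telescoping sum replaced by the absolutely convergent zeta sums $\zeta(7/3)-1$ and $\tfrac{1}{2}\zeta(4/3)$ appearing in \eqref{chepas0}, so only the constants in the analogue of $\tau$ change.
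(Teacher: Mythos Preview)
Your overall strategy matches the paper's: inject a Fourier mode, split off the tail $\sum_{l\ge 2}$, bound it in $\ell^1$ by $\tau$, and then show the remaining ``head'' has modulus at most $1-\tau$. The difference is in how the head is controlled.

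The paper does \emph{not} keep the head as a three-term polynomial $A_{-1}e^{i\theta}+A_0+A_1e^{-i\theta}$. It absorbs the symmetric Laplacian contribution $A_{-1}(e^{i\theta}+e^{-i\theta})=2A_{-1}\cos\theta$ into a real, $\theta$-dependent coefficient, so that the head reads $a(\theta)+b\,e^{-i\theta}$ with $b=A_1-A_{-1}\ge 0$. Lemma~\ref{cercleremark} (which is stated for the two-term form $a+be^{-i\theta}$, not for three terms) then yields two \emph{linear} inequalities, $a(\theta)+b\le 1-\tau$ and $a(\theta)-b\ge -(1-\tau)$; the first reduces exactly to \eqref{condbiz1}, the second to \eqref{conditionstab01}. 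For $\I_{\delta x}^2$ the same two-term regrouping is performed (the $\tfrac12\zeta(\tfrac43)$ pieces are also folded into $a(\theta)$), and Lemma~\ref{cercleremark} again delivers \eqref{condbiz2} and \eqref{conditionstab02} directly.

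Your alternative---the exact quadratic identity for $|A_{-1}e^{i\theta}+A_0+A_1e^{-i\theta}|^2$---is correct, but it is not an instance of Lemma~\ref{cercleremark}, so invoking that lemma here is misplaced. More importantly, the closing ``algebraic check'' has loose ends. The bound $A_0(A_{-1}+A_1)\ge\tau$ does hold (it is equivalent to $A_{-1}+A_1\in[\tau,1]$), but it is not ``CFL-derived'': the CFL condition gives only the upper bound $A_{-1}+A_1\le 1$, while the lower bound $A_{-1}+A_1\ge 2A_{-1}\ge\tau$ already uses \eqref{condbiz1}. Even with this bound in hand, it settles only $\theta=\pi$; for $\theta<\pi$ you still have to show that the $4A_{-1}A_1\cos^2(\theta/2)$ term compensates the factor $\sin^2(\theta/2)<1$, and you have not indicated how. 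The paper's two-term reduction avoids this quadratic bookkeeping entirely. Finally, for scheme~2 the claim that CFL makes $A_{-1}\ge 0$ is false: here $A_{-1}=\epsilon\,\delta t/\delta x^{2}-\tfrac{2}{9}\zeta(\tfrac43)\,\eta\,\delta t/\delta x^{4/3}$, and its nonnegativity is a consequence of \eqref{condbiz2}, not of \eqref{conditionstab02}.
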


\begin{proof}
\noindent \textbf{For $\I_{\delta x}^1$.} 

For the numerical scheme \eqref{schema1b},
the amplification factor is given by:
\begin{eqnarray}
g_1(\delta x, \delta t, \theta)&=&1-\frac{v \, \delta t}{\delta x}-2\frac{ \epsilon \, \delta t}{\delta x^2}(1-\cos \theta)-  \frac{\eta \, \delta t}{\delta x^{4/3}}+ \left( \frac{v \, \delta t}{\delta x}+ (2-2^{-1/3})\frac{\eta  \, \delta t}{\delta x^{4/3}}\right) e^{-i \theta} \nonumber \\
&-& \frac{\eta \, \delta t}{\delta x^{4/3}} \sum_{l=2}^{\infty}
\left[(l+1)^{-1/3}-2l^{-1/3}+(l-1)^{-1/3}\right] e^{-i l \theta},
\label{amplifidiscret1}
\end{eqnarray}
where $\theta= k\delta x$. 
Since, for all $N \in \mathbb{N}$
\begin{equation*}
\sum_{l=2}^{N} \left[ 
(l+1)^{-1/3}-2l^{-1/3}+(l-1)^{-1/3} \right] =(N+1)^{-1/3}-N^{-1/3} - 2^{-1/3} + 1,
\end{equation*}
then
\begin{equation}
\sum_{l=2}^{+\infty}\left[ 
(l+1)^{-1/3}-2l^{-1/3}+(l-1)^{-1/3} \right] =1-2^{-1/3} > 0.
\label{somme}
\end{equation}

Thus, from \eqref{amplifidiscret1}, to have 
$|g_1(\delta x, \delta t, \theta)| \leq 1 $ it is sufficient to have
\begin{eqnarray}
 \left| 1-\frac{v \, \delta t}{\delta x} - 4  \sin^{2}(\frac{\theta}{2})\frac{\epsilon \, \delta t}{\delta x^2} - \frac{\eta \, \delta t}{\delta x^{4/3}}+ \left( \frac{v \, \delta t}{\delta x} + (2-2^{-1/3})\frac{ \eta  \, \delta t}{\delta x^{4/3}}\right)e^{-i \theta}  \right|& \leq& \hspace{1 cm}\nonumber \\
  1  -\frac{\eta  \, \delta t}{\delta x^{4/3}}(1-2^{-1/3}),
\label{pasinspire}
\end{eqnarray}
where we assume that 
\begin{equation}
\frac{\eta \, \delta t}{\delta x^{4/3}}(1-2^{-1/3})<1.
\label{hypothese1}
\end{equation}
Next from Lemma \ref{cercleremark}, \eqref{pasinspire} is satisfied if and only if we have
\begin{equation*}
  \left\{
      \begin{aligned}
       1-4\frac{ \epsilon \, \delta t}{\delta x^2} \sin^{2}(\frac{\theta}{2})+ \frac{\eta \, \delta t}{\delta x^{4/3}}(1-2^{-1/3}) \leq 1  -(1-2^{-1/3}) \frac{\eta \, \delta t}{\delta x^{4/3}}, \\
1-2 \frac{v \,  \delta t}{\delta x}-4\frac{\epsilon \, \delta t}{\delta x^2} \sin^{2}(\frac{\theta}{2})-(3-2^{-1/3}) \frac{\eta \, \delta t}{\delta x^{4/3}} \geq -\left(1 -(1-2^{-1/3}) \frac{\eta \, \delta t}{\delta x^{4/3}}\right).
      \end{aligned}
    \right.
\end{equation*}
A sufficient condition is then
\begin{eqnarray}
(1-2^{-1/3}) \frac{\eta \, \delta t}{\delta x^{4/3}} \leq  2\frac{ \epsilon \, \delta t}{\delta x^2} \sin^{2}(\frac{\theta}{2}), \label{constab0}\\
\frac{v \, \delta t}{\delta x} + 2 \frac{ \epsilon \, \delta t}{\delta x^2}+(2-2^{-1/3})  \frac{\eta \, \delta t}{\delta x^{4/3}} \leq 1. \nonumber \label{constab0b}
\end{eqnarray}
Let $0<\theta_0<\pi$. Then, for all $\theta \in (\theta_0, \pi] $, condition \eqref{constab0} can be rewritten as
\begin{eqnarray*}
(1-2^{-1/3}) \frac{\eta \, \delta t}{\delta x^{4/3}} \leq  2\frac{ \epsilon \, \delta t}{\delta x^2} \sin^{2}(\frac{\theta_0}{2}). 
\end{eqnarray*}
Therefore, the numerical scheme \eqref{FDlinear}  with the discretization $\I_{\delta x}^1$ is stable in the sense of Definition \ref{definitionstability} if the space and time steps $\delta t, \delta x$ satisfy the following conditions
\begin{eqnarray}
\delta x^{2/3} \leq \frac{2}{(1-2^{-1/3})} \sin^{2}(\frac{\theta_0}{2}) \frac{\epsilon}{\eta}, \label{constab00b}\\
\frac{v \, \delta t}{\delta x} + 2 \frac{ \epsilon \, \delta t}{\delta x^2}+(2-2^{-1/3})  \frac{\eta \, \delta t}{\delta x^{4/3}} \leq 1. \label{constab0b}
\end{eqnarray}
Note that from condition \eqref{constab0b}, we can see that hypothesis \eqref{hypothese1} is satisfied.  \\

\noindent \textbf{For $\I_{\delta x}^2$.}
Injecting \eqref{mode} in \eqref{chepas0}, the amplification factor $g_2$ associated to this scheme is given by:
\begin{eqnarray}
g_2( \delta x,\delta t,  \theta)&:=&  e^{i \theta} \left( \frac{ \epsilon \, \delta t}{\delta x^2} - \frac{1}{2} \zeta(\frac{4}{3}) \frac{4}{9} \frac{ \eta \, \delta t}{\delta x^{4/3}}  \right) + 1 - \frac{v \, \delta t}{\delta x} - 2 \frac{\epsilon \, \delta t}{\delta x^2} +  \frac{4}{9}  \zeta(\frac{7}{3}) \frac{\eta \, \delta t}{\delta x^{4/3}} \nonumber \\ 
&+& e^{-i \theta}\left( \frac{v \, \delta t}{\delta x} +    \frac{ \epsilon \, \delta t}{\delta x^2} +    \frac{4}{9} (\frac{1}{2} \zeta(\frac{4}{3})-1 ) \frac{\eta \, \delta t}{\delta x^{4/3}}  \right) - \frac{4}{9}\frac{ \eta \, \delta t}{\delta x^{4/3}} \sum_{l \geq 2} l^{-7/3} e^{-i \theta l}, \nonumber
 \\ 
&=& 1-\frac{v \, \delta t}{\delta x}- 4 \frac{\epsilon \, \delta t}{\delta x^2} \sin^{2}(\frac{\theta}{2}) + \frac{4}{9}  \zeta(\frac{7}{3}) \, \frac{\eta \, \delta t}{\delta x^{4/3}} -i \,  \frac{4}{9}\zeta(\frac{4}{3}) \, \frac{\eta \, \delta t}{\delta x^{4/3}}  \sin \theta \nonumber \\
&+& \left(\frac{v \,\delta t}{\delta x}-  \frac{4}{9} \frac{\eta \, \delta t}{\delta x^{4/3}}\right) e^{-i \theta}- \frac{4}{9} \frac{\eta \, \delta t  }{\delta x^{4/3}}\sum_{l=2}^{+\infty} l^{-7/3} e^{-i \theta l},  \nonumber \\
&=& 1-\frac{v \, \delta t}{\delta x}- 4 \frac{\epsilon \, \delta t}{\delta x^2} \sin^{2}(\frac{\theta}{2}) + \frac{4}{9} \left( \zeta(\frac{7}{3})  -  \zeta(\frac{4}{3}) \cos \theta\right)  \, \frac{\eta \, \delta t}{\delta x^{4/3}}   \nonumber \\
&+&  \left(\frac{v \,\delta t}{\delta x} +  \frac{4}{9} \left( \zeta ( \frac{4}{3}) -1\right) \frac{\eta \, \delta t}{\delta x^{4/3}}\right)e^{-i \theta}- \frac{4}{9} \frac{\eta \, \delta t  }{\delta x^{4/3}}\sum_{l=2}^{+\infty} l^{-7/3} e^{-i \theta l}.     
\label{amplifidiscret2}      
\end{eqnarray}
Since 
$$\sum_{l=2}^{+\infty} l^{-7/3} = \zeta(\frac{7}{3})-1 \approx 0.415,$$
from \eqref{amplifidiscret2}, $|g_2(\delta x, \delta t, \theta)|\leq 1 $ if
\begin{eqnarray}
\left| 1-\frac{v \, \delta t}{\delta x}- 4 \frac{\epsilon \, \delta t}{\delta x^2} \sin^{2}(\frac{\theta}{2}) + \frac{4}{9} \left( \zeta(\frac{7}{3})  -  \zeta(\frac{4}{3}) \cos \theta\right)  \, \frac{\eta \, \delta t}{\delta x^{4/3}} +\left(\frac{v \,\delta t}{\delta x} +  \frac{4}{9} \left( \zeta ( \frac{4}{3}) -1\right) \frac{\eta \, \delta t}{\delta x^{4/3}}\right)e^{-i \theta} \right| &\leq& \nonumber \\ 
\hspace{-4 cm} 1 - \frac{4}{9} \left( \zeta(\frac{7}{3}) - 1\right) \frac{\eta \, \delta t  }{\delta x^{4/3}},
 \label{pasinspire2}
\end{eqnarray}
where we assume that
\begin{equation}
\frac{4}{9} \left( \zeta(\frac{7}{3}) - 1\right) \frac{\eta \, \delta t  }{\delta x^{4/3}} < 1.
\label{hypothese2}
\end{equation}

From Lemma \ref{cercleremark}, we have that \eqref{pasinspire2} is satisfied if and only if
\begin{equation*}
  \left\{
      \begin{aligned}
       1-4 \frac{\epsilon \, \delta t}{\delta x^2} \sin^{2}(\frac{\theta}{2}) + \frac{4}{9} \left( \zeta(\frac{7}{3}) -1 + \zeta(\frac{4}{3})(1- \cos \theta ) \right)\frac{\eta \, \delta t}{\delta x^{4/3} } \leq 1 - \frac{4}{9}  \left(\zeta(\frac{7}{3}) -1 \right)\frac{\eta \, \delta t}{\delta x^{4/3} },  \\
       1 - 2 \frac{v \, \delta t}{\delta x} - 4 \frac{\epsilon \, \delta t}{\delta x^2} \sin^{2}(\frac{\theta}{2}) + \frac{4}{9}  \left(\zeta(\frac{7}{3}) + 1 - \zeta(\frac{4}{3}) (1 + \cos \theta) \right) \frac{\eta \, \delta t}{\delta x^{4/3} } \geq - 1 + \frac{4}{9}  \left(\zeta(\frac{7}{3}) - 1 \right) \frac{\eta \, \delta t}{\delta x^{4/3} }.
      \end{aligned}
    \right.
\end{equation*}
A sufficient condition is then
\begin{eqnarray}
    \frac{4}{9} \left( \zeta(\frac{7}{3})-1 + \zeta(\frac{4}{3}) \right)\frac{\eta \, \delta t}{\delta x^{4/3}} \leq 2  \frac{\epsilon \, \delta t}{\delta x^2} \sin^{2}(\frac{\theta}{2}), \label{condstab02} \\
\frac{v\, \delta t}{\delta x}+ 2 \frac{\epsilon \, \delta t}{\delta x^2} +  \frac{4}{9}  \left(\zeta(\frac{4}{3}) -1 \right) \frac{\eta \, \delta t}{\delta x^{4/3}} \leq 1.
\label{condstab02b}
\end{eqnarray}
Let $0<\theta_0<\pi$. Then, for all $\theta \in (\theta_0, \pi] $, condition \eqref{condstab02} is rewritten as
\begin{eqnarray}
\frac{4}{9} \left( \zeta(\frac{7}{3})-1 + \zeta(\frac{4}{3}) \right) \frac{\eta \, \delta t}{\delta x^{4/3}} \leq 2  \frac{\epsilon \, \delta t}{\delta x^2} \sin^{2}(\frac{\theta_0}{2}),
\label{condstab02bb} 
\end{eqnarray}
where $\zeta(\frac{7}{3})-1 + \zeta(\frac{4}{3}) \approx 4.02.$ \\
Note again that from condition \eqref{condstab02b}, we can see that hypothesis \eqref{hypothese2} is satisfied.

\end{proof}

\noindent \textbf{Notations.} We will denote by $CFL_{mod}^1$ and $CFL_{mod}^2$ the following modified Courant-Friedrichs-Lewy conditions 
\begin{equation*}
CFL_{mod}^1 = \frac{ v \, \delta t}{\delta x}+ 2\frac{ \epsilon \, \delta t}{\delta x^2}+ (2-2^{-1/3})  \frac{\eta \, \delta t}{\delta x^{4/3}} \leq 1, \\
\end{equation*} 

\begin{equation*}
CFL_{mod}^2 = \frac{v\, \delta t}{\delta x}+ 2 \frac{\epsilon \, \delta t}{\delta x^2} +  \frac{4}{9} \left(\zeta(\frac{4}{3}) -1 \right) \frac{\eta \, \delta t}{\delta x^{4/3}}  \leq 1. \\
\end{equation*} 

\bigskip

\noindent \textbf{Some remarks.}\\

\noindent \textbf{1.} Condition \eqref{conditionstab01} (resp. \eqref{conditionstab02}) can be seen as an extension                                                                   
of the classical CFL condition with in addition the anti-diffusive term $ \frac{\eta \, \delta t}{\delta x^{4/3}} $. This criterion is not
more restrictive than the usual condition of stability without the nonlocal operator which corresponds to the linearized
Burgers equation with viscous term. This condition is very restrictive on the space and time steps in particular because of the term $\frac{\epsilon \, \delta t}{\delta x^2}$ which stems from the explicit discretization of the Laplacian. In order to have less restrictive conditions, we can implicit some terms. 
For instance, if we decide to implicit the nonlocal and the Laplacian terms, condition \eqref{conditionstab01} (resp. \eqref{conditionstab02} ) is reduced to
\begin{equation*}
\frac{v \, \delta t}{\delta x}<1.
\end{equation*}
We find again the well-known CFL condition.\\

\noindent Figure \ref{amplifiAstab} shows the behaviour of amplification factors for $\I^1_{\delta x}$ and $\I^2_{\delta x}$. We can see, for  $\I^1_{\delta x}$, that the maximal value of $\delta t$ which ensures the numerical stability
is $\delta t_{\max} \approx 0.042$ and that for this value we have $CFL_{mod}^1 \approx 0.99$. 
\begin{figure}[h!]
	\centering
	\includegraphics[scale=0.7]{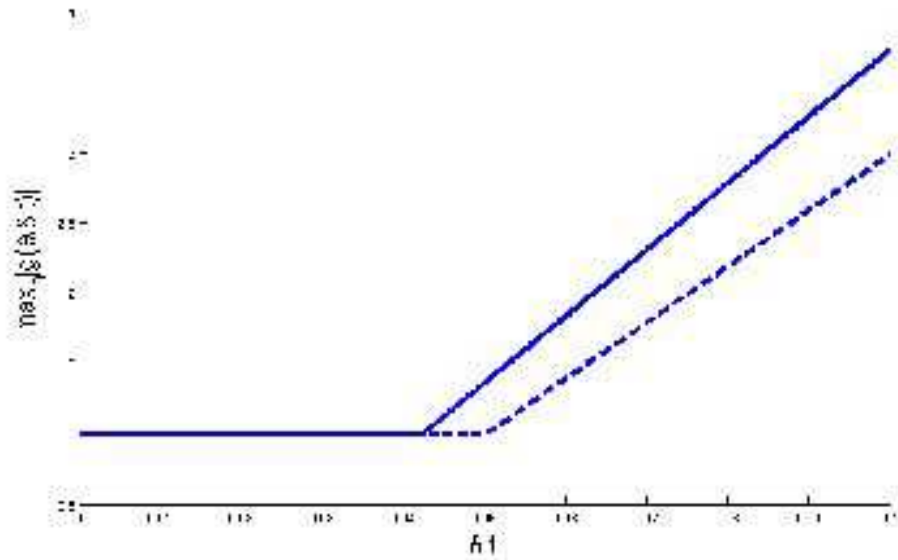} 
	\caption{Amplification factors for $\I^1_{\delta x}$ (blue line) and $\I^2_{\delta x}$ (dashed line).}
\label{amplifiAstab}
\end{figure}
Figure \ref{cnvscs} displays the behaviour of the modulus of the amplification factor with discretization $\I_{\delta x}^1$ as a function of $\theta$. We can notice that the high frequencies are strongly amplified. This phenomenon illustrates the numerical instability because high frequencies should be quickly dampened. \\
\begin{figure}[h!]
	\centering
	\includegraphics[scale=0.7]{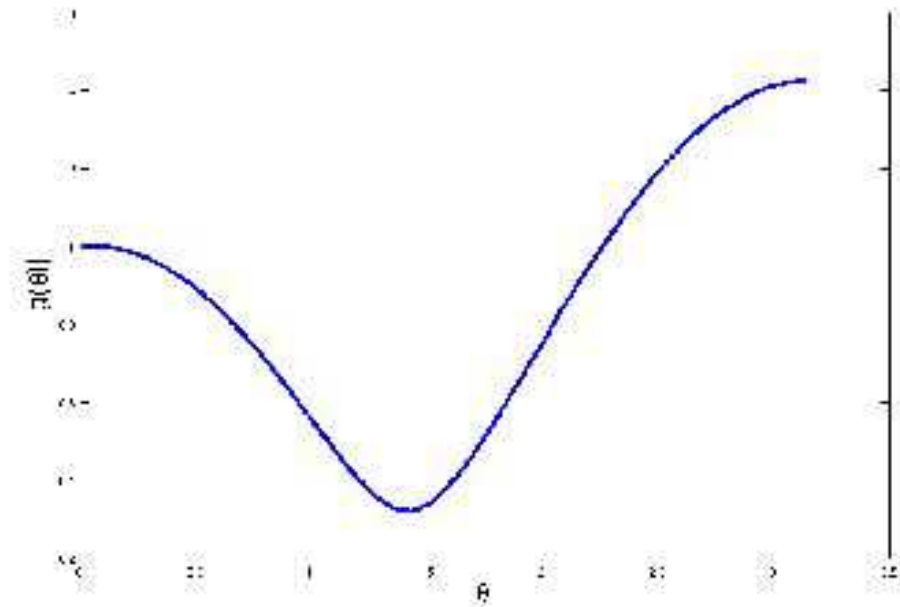} 
	\caption{Amplification factor for  $\I^1_{\delta x} $ with $CFL_{mod}^1 \approx 1.22$. }
\label{cnvscs}
\end{figure} 

Figure \ref{amplifbassefrequence} shows that the low frequencies are slowly amplified. This phenomenon is not due to the instability of numerical schemes but stems from the model. In Tables \ref{table13}, \ref{table23} and \ref{table33} (see Section \ref{erreur}), we have studied the quotient $\frac{|g_i|}{|G_{cont}|}$, $i=1,2$. We can see that globally the discrete schemes dampen more than the continuous problem when the stability conditions \eqref{conditionstab01} and \eqref{conditionstab02} are satisfied. \\

\begin{figure}[h!]
	\centering
	\includegraphics[scale=0.7]{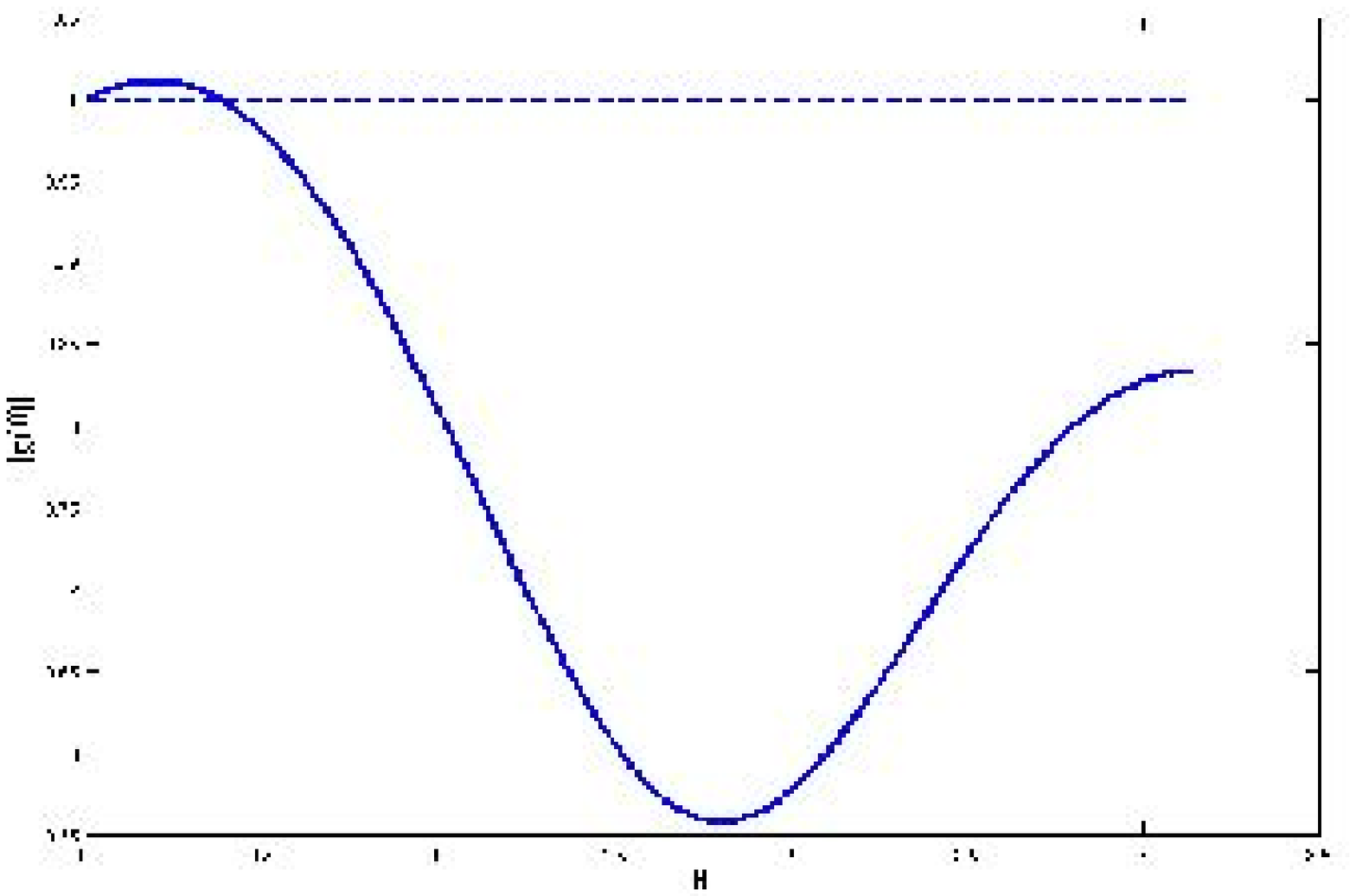} 
	\caption{Amplification factor $\I^1_{\delta x}$ with $\eta=8, v=1,\epsilon=0.5$ and $\delta x= 0.05, \delta t=0.001$. For these coefficients, $CFL_{mod}^1 \approx 0.94.$
\label{amplifbassefrequence}}
\end{figure}

\noindent \textbf{2.} Conditions \eqref{condbiz1} and \eqref{condbiz2}  are unusual and deserve some explanations.
The term proportional to $ \frac{\eta \, \delta t}{\delta x^{4/3}} $ represents the amount of nonlocal anti-diffusion  while the term proportional to $ \frac{\epsilon \, \delta t}{\delta x^2} $ corresponds to the amount of classical diffusion. Both conditions simply mean that, for frequencies  above  the threshold $\theta_0$, diffusion should control nonlocal anti-diffusion. \\
We can see that conditions \eqref{condbiz1} and \eqref{condbiz2}  cannot be satisfied for low frequencies. Indeed, for $\theta_0$ close to 0, these criteria impose to the space step to vanish, which is not possible. Let us note that this is coherent because the low frequencies are not ``strongly stable'', they are slowly amplified by the continuous problem. We can see in Figure \ref{stablemalgre} that condition  \eqref{condbiz1} is not necessary. Indeed, if we choose the threshold $\theta_0 = \pi/2$ ``large enough'', condition \eqref{condbiz1} reads 
\begin{equation}
\delta x \leq 0.25,
\label{cspasne}
\end{equation}
 and we have plotted $|g_1|$ in function of $\theta$ for $\delta x = 0.5 $ which does not satisfy the condition \eqref{cspasne} but we can still notice that the numerical scheme is stable. All numerical simulations that we performed confirm this statement. This leads us to think that condition  \eqref{condbiz1} (resp. \eqref{condbiz2}) is too pessimistic. In fact to estimate the  magnitude of sums
$$  \sum_{l=2}^{\infty}
\left[(l+1)^{-1/3}-2l^{-1/3}+(l-1)^{-1/3}\right] e^{-i l \theta}, $$
(resp.  $\sum_{l=2}^{+\infty} l^{-7/3} e^{-i l \theta } $), we just controlled the sum of the modulus
$$  \sum_{l=2}^{\infty} 
\left[(l+1)^{-1/3}-2l^{-1/3}+(l-1)^{-1/3}\right], $$
(resp. $\sum_{l=2}^{+\infty} l^{-7/3} $). In this manner, we probably miss some cancellation effect of the 
$e^{-i l \theta}$. But we could not find any other way to estimate these polylogarithm series. \\
  
\begin{figure}[h!]
	\centering
	\includegraphics[scale=0.7]{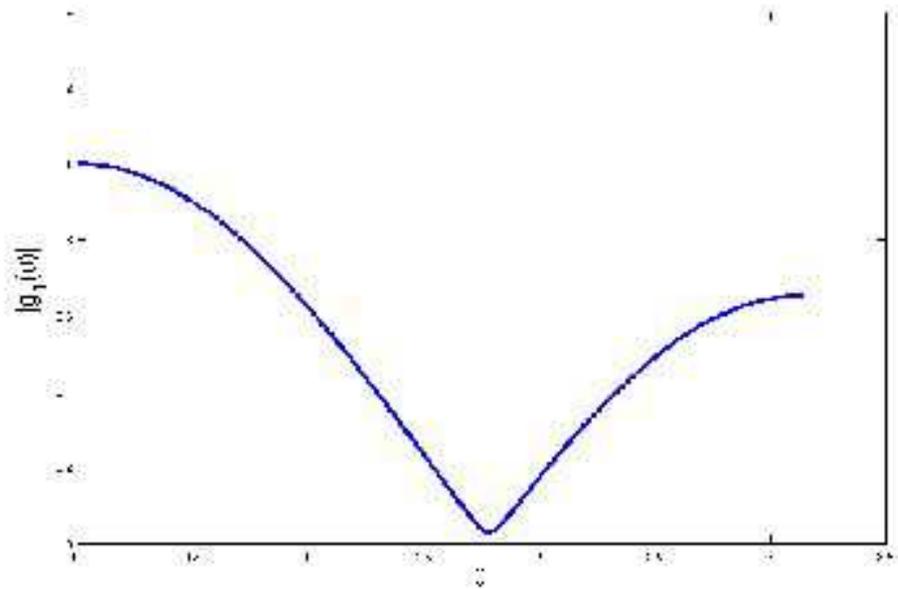} 
	\caption{Amplification factor $g_1$ for $\eta=v=1, \epsilon=0.1$ and $\delta x= 0.5, \delta t=0.01$. For these coefficients, we have $CFL_{mod}^1 \approx  0.0584$.}
\label{stablemalgre}
\end{figure}

\noindent \textbf{3.} Finally, in practice, the single condition \eqref{conditionstab01} (resp. \eqref{conditionstab02}) can be used to ensure the numerical stability of the scheme \eqref{FDlinear} with $\I^1_{\delta x}$ (resp.  $\I^2_{\delta x}$). We saw in Figures \ref{amplifbassefrequence} and \ref{stablemalgre} that the scheme with the discretization $\I^1_{\delta x}$ is stable if condition \eqref{conditionstab01} is satisfied. Figure \ref{factorfig} shows that the high frequencies are amplified, when condition \eqref{conditionstab02} is violated. This phenomenon is only due to numerical instability because the continuous problem quickly dampens the high frequencies. \\

\begin{figure}[h!]
	\centering
	\includegraphics[scale=0.7]{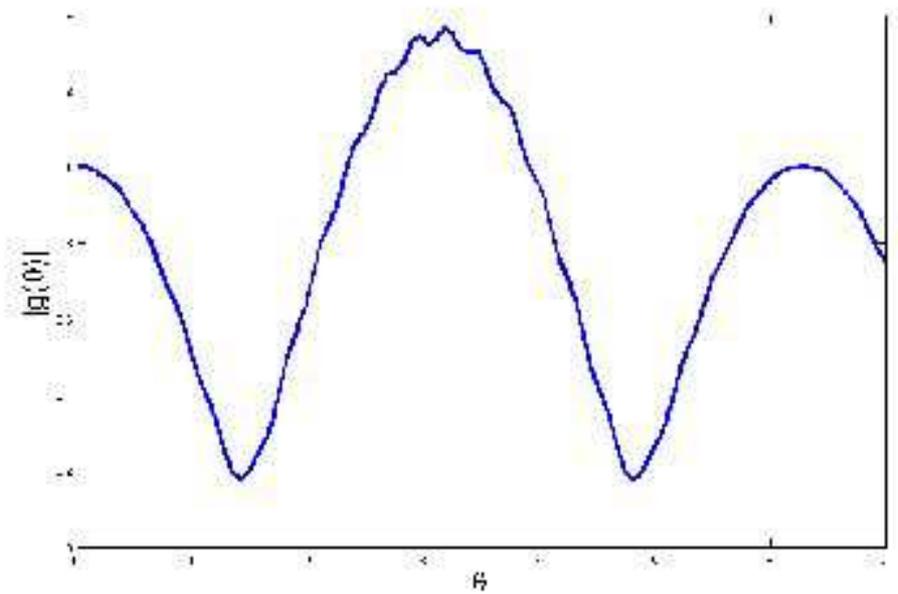} 
	\caption{Gain function for $\I_{\delta x}^2$ for  $\eta=v=\epsilon=1,$ $ \delta x=0.1, \delta t = 0.05$. For these coefficients, we have $CFL_{mod}^2 \approx  1.3$. \label{factorfig}
}
\end{figure} 

\section{Error analysis \label{erreur}}

\subsection{Truncation error \label{error}} 

In this section, we analyze the truncation error. Finite difference scheme \eqref{FDlinear} is consistent with the linearized partial differential equation if for any smooth function $\phi(t,x)$ the local error $E_{\delta t, \delta x}$ satisfies 
\begin{equation}
E_{\delta t, \delta x} = P\phi-P_{\delta t,\delta x}^{i} \phi \rightarrow 0,
\label{error}
\end{equation}  
as  $\delta t,\delta x\rightarrow 0$ with
\begin{equation*}
  \left\{
      \begin{aligned}
P\phi = \phi_{t}+ v \,\phi_{x}- \epsilon \, \phi_{xx} + \eta \, \I[\phi],\\
P_{\delta t,\delta x }^{i}\phi =\frac{\phi_{j}^{n+1}-\phi_{j}^{n}}{\delta t}+ v \, \frac{\phi_{j}^{n}-\phi_{j-1}^{n}}{\delta x}-\epsilon \, \frac{\phi_{j+1}^{n}-2\phi_{j}^{n}+\phi_{j-1}^{n}}{\delta x^{2}} + \eta \, \I_{\delta x}^{i}[\phi],
      \end{aligned}
    \right.
\end{equation*}
for $i=1,2$. \\

\begin{remark}
The practical implementation of the schemes requires to make some truncations. First, we consider a bounded domain $[0, T] \times [0, D]$ and to simplify, we also assume that $\delta t = T /N_{\delta t}$ and $\delta x = D/N_{\delta x}$ for some integers $N_{\delta x}$ and $N_{\delta t}$. Another truncation concerns                     
the integral operator for the nonlocal term $\I$. We replace $\int_{0}^{+\infty}$ with $\int_{0}^A$ and in the finite difference approximations \eqref{discretization14} and \eqref{discretization24} series $\sum_{l=1}^{\infty} $ are replaced with partial sums   $\sum_{l=1}^{A_{\delta x}}$, where $A=A_{\delta x} \,\delta x$. However, the truncation parameter $A$ has to be chosen judiciously. A ``short memory'' principle has been investigated to choose this parameter.
This principle is based on the fact that terms $l^{-7/3}$ and $l^{-1/3}$ in discretizations \eqref{discretization14} and \eqref{discretization24} decrease with $l$ therefore, we have to take into account the behaviour of $\varphi(x)$ only in the recent past, i.e. in the interval $[x-L,x]$, where $L>0$ is called the ``memory length''. Finally, the use of the short-memory principle leads to the simple replacement of  $\sum_{l=1}^{+\infty}$ by  $\sum_{l=1}^{A_{\delta x}}$, where $A_{\delta x}= [\frac{L}{\delta x}]$ \cite{pod}. \\
Note that the truncation parameter $A$ also strongly depends on the discretization of the nonlocal term $\I$ because  $l^{-7/3}$ decreases more quickly than $l^{-1/3}$. For the sake of simplicity, we will denote by $A$ the truncation parameter for the discretizations  \eqref{discretization14} and \eqref{discretization24}. 
\end{remark} 

\bigskip 

\begin{proposition}[Local error\label{consistance}] 

The local error of the numerical scheme \eqref{FDlinear} satisfies: 
\begin{itemize}
\item For $\I_{\delta x}^1$:
\begin{equation}
|E_{\delta t, \delta x, A}^1|  \leq \mathcal{O}(\delta t) +\mathcal{O}(\delta x^{2/3}) + \mathcal{O}(A^{-1/3}) +  \mathcal{O}\left(  A^{-1/3} \, \delta x \right) + \mathcal{O}\left(  A^{2/3} \, \delta x^2 \right) .
\label{errorestimate031}
\end{equation}

\item For $\I_{\delta x}^2$:
\begin{equation}
|E_{\delta t, \delta x, A}^2|  \leq \mathcal{O}(\delta t) +\mathcal{O}(\delta x^{2/3}) + \mathcal{O}(A^{-1/3}) + \mathcal{O}\left( A^{-4/3} \, \delta x \right) + \mathcal{O}\left( A^2 \, \delta x^3 \right) + \mathcal{O}\left( A \, \delta x^2 \right).
\label{errorestimate032}
\end{equation}
\end{itemize}

\end{proposition}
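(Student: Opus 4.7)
The plan is to use linearity of the truncation error to decompose
$$E_{\delta t, \delta x, A}^i = E_t + v\,E_x - \epsilon\,E_{xx} + \eta\,\bigl(\I[\phi] - \I_{\delta x}^{i,A}[\phi]\bigr),$$
where $E_t$, $E_x$, $E_{xx}$ denote the local errors of the explicit Euler step, the upstream $\partial_x$ and the centred $\partial_{xx}^2$ discretisations respectively, and $\I_{\delta x}^{i,A}$ is the discrete operator with its series truncated at $l = A_{\delta x}$. The three classical pieces are standard: Taylor expansion of $\phi$ around $(t^n,x_j)$ yields $E_t = \mathcal{O}(\delta t)$, $E_x = \mathcal{O}(\delta x)$ and $E_{xx} = \mathcal{O}(\delta x^2)$, the last two of which are dominated by the $\mathcal{O}(\delta x^{2/3})$ contribution of the nonlocal term and therefore disappear from the final bound.

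All the difficulty lies in the nonlocal error $\I[\phi] - \I_{\delta x}^{i,A}[\phi]$. I would split it further by introducing an intermediate truncated integral $\I^A[\phi]$ (the integral in \eqref{nonlocalterm3b} or \eqref{intformula} restricted to $[0,A]$) and a quadrature $Q^A$ based on the nodes $l\delta x$, $l = 1,\ldots,A_{\delta x}$, using the \emph{exact} derivative $\phi''$ for scheme~1 or $\phi'$ for scheme~2. This yields three contributions: (i) the improper-integral tail $\I[\phi] - \I^A[\phi]$; (ii) the quadrature error $\I^A[\phi] - Q^A$; (iii) the finite-difference error $Q^A - \I_{\delta x}^{i,A}[\phi]$. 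For (i), I would rely on the representation \eqref{intformula}, whose integrand decays like $|z|^{-4/3}$ at infinity because $\phi(x+z) - \phi(x) - \phi'(x)z = \mathcal{O}(|z|)$ for bounded $\phi,\phi'$; hence the tail $\int_{-\infty}^{-A}$ is of order $A^{-1/3}$, providing the $\mathcal{O}(A^{-1/3})$ contribution in both bounds. For (iii), Taylor gives $\phi''$ vs.\ the centred Laplacian of order $\delta x^2$ (scheme~1) and $\phi'$ vs.\ the centred first difference of order $\delta x^2$ (scheme~2); multiplying these cell-wise errors by the discrete weights $\delta x^{-4/3}l^{-1/3}$ or $\delta x^{-4/3} l\cdot l^{-7/3}$ and summing up to $A_{\delta x} = \lceil A/\delta x\rceil$ via the partial-sum asymptotics $\sum_{l\leq A_{\delta x}} l^{-1/3}\sim A_{\delta x}^{2/3}$ and $\sum_{l\leq A_{\delta x}} l^{-4/3} = \mathcal{O}(1)$ produces the scheme-specific remainders $\mathcal{O}(A^{2/3}\delta x^2)$ for $\I_{\delta x}^1$ and $\mathcal{O}(A\,\delta x^2)$, $\mathcal{O}(A^2\delta x^3)$ for $\I_{\delta x}^2$.

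The main obstacle is step (ii): the weight $\xi^{-1/3}$ (resp.\ $|z|^{-7/3}$) is singular at the origin, so the rectangle-rule estimate breaks down on the first cell. A direct Taylor expansion inside the integral shows
$$\int_0^{\delta x}\xi^{-1/3}\phi''(x-\xi)\,d\xi = \tfrac{3}{2}\phi''(x)\delta x^{2/3} + \mathcal{O}(\delta x^{5/3}),$$
while the single quadrature value at $\xi = \delta x$ contributes only $\delta x^{2/3}\phi''(x) + \mathcal{O}(\delta x^{5/3})$, so the cell-wise discrepancy is exactly $\tfrac{1}{2}\phi''(x)\delta x^{2/3}$; this irreducible contribution forces the dominant $\mathcal{O}(\delta x^{2/3})$ rate and reflects the fractional nature of $\I$. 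On each remaining cell $[(l-1)\delta x, l\delta x]$ with $l\geq 2$ the integrand is smooth and a Taylor bound gives a cell-wise error of $\mathcal{O}(\delta x^2 (l\delta x)^{-4/3})$, which, summed against the singular weights and combined with the tail estimate $\sum_{l > A_{\delta x}} l^{-4/3} = \mathcal{O}(A_{\delta x}^{-1/3})$, yields the small remainders $\mathcal{O}(A^{-1/3}\delta x)$ for scheme~1 and $\mathcal{O}(A^{-4/3}\delta x)$ for scheme~2. Collecting all the contributions gives the two announced bounds.
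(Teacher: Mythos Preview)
Your decomposition into tail, quadrature error, and finite-difference replacement is exactly the strategy the paper follows, and you correctly identify the principal contributions $\mathcal{O}(\delta t)$, $\mathcal{O}(\delta x^{2/3})$, $\mathcal{O}(A^{-1/3})$ and (for scheme~1) $\mathcal{O}(A^{2/3}\delta x^{2})$. The only structural difference is that the paper works with midpoint cells $[(j-\tfrac12)\delta x,(j+\tfrac12)\delta x]$ and extracts the $\mathcal{O}(\delta x^{2/3})$ both from the stub $\int_0^{\delta x/2}$ and from the mean-value bound $|\xi_{\delta x}^{-1/3}-\xi^{-1/3}|\le C\,\delta x\,((j-\tfrac12)\delta x)^{-4/3}$ summed over all cells, rather than isolating the first singular cell as you do.

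Where your sketch goes astray is in the accounting for the four subdominant remainders $\mathcal{O}(A^{-1/3}\delta x)$, $\mathcal{O}(A^{-4/3}\delta x)$, $\mathcal{O}(A\,\delta x^{2})$, $\mathcal{O}(A^{2}\delta x^{3})$. Your claimed source for the first two---summing cell-wise quadrature errors $\mathcal{O}(\delta x^{2}(l\delta x)^{-4/3})$ and invoking a ``tail'' $\sum_{l>A_{\delta x}}l^{-4/3}$---does not produce them: that sum is simply $\mathcal{O}(\delta x^{2/3})$, already absorbed. In the paper these terms arise instead from the half-cell overhang $\int_A^{A+\delta x/2}$ peculiar to the midpoint grid; with your left-endpoint cells there is no overhang and they never appear. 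Similarly for scheme~2, your own computation $\delta x^{-4/3}\sum_{l\le A_{\delta x}} l\cdot l^{-7/3}\cdot\mathcal{O}(\delta x^{2})=\mathcal{O}(\delta x^{2/3})$ shows that the finite-difference replacement of $\phi'$ does \emph{not} yield $\mathcal{O}(A\,\delta x^{2})$ or $\mathcal{O}(A^{2}\delta x^{3})$; in the paper those come from a (rather loose) bound on the midpoint remainder $\tfrac{\delta x^{3}}{24}\Phi_{\xi\xi}$. Since all four mis-attributed terms are dominated by contributions you establish correctly, simply deleting the faulty justifications leaves a valid proof of the proposition---indeed one giving a slightly sharper estimate than stated.
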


\begin{proof}
From Taylor series, we have
\begin{equation}
\phi_{t}\left(t_{n},x_{i}\right)- \frac{\phi^{n+1}_i -\phi^{n}_{i}}{\delta t} = \mathcal{O}(\delta t), 
\label{DL1}
\end{equation}
\begin{equation}
\phi_{x}\left(t_{n},x_{i}\right)- \frac{\phi^{n}_i -\phi^{n}_{i-1}}{\delta x} = \mathcal{O}(\delta x), 
\label{DL2}
\end{equation}
\begin{equation}
\phi_{xx}\left(t_{n},x_{i}\right)- \frac{\phi^{n}_{i+1} -2\phi^{n}_{i} + \phi^{n}_{i-1}}{\delta x^2} = \mathcal{O}(\delta x^2).
\label{DL3}
\end{equation}
Let us now study the truncation error for the nonlocal term $\I$. 

\noindent \textbf{For $\I_{\delta x}^1$:}
We rewrite \eqref{nonlocalterm3b} as follows
\begin{eqnarray}
&\,& \I[\phi(t^n, \cdot)](x_i)  =  \sum_{j=1}^{A_{\delta x} } \int_{(j-1/2)\delta x}^{(j+1/2)\delta x} \xi^{-1/3} \phi_{xx}(t^n, x_i- \xi) \, d\xi + \int_{0}^{\frac{\delta x}{2}} \xi^{-1/3} \phi_{xx}(t^n, x_i- \xi) \, d\xi \nonumber \\ 
& \, & \hspace{2.5 cm} + \int_{ A + \frac{\delta x}{2} }^{ A } \xi^{-1/3} \phi_{xx}(t^n, x_i- \xi) \, d\xi + \int_A^{+\infty} \xi^{-1/3} \phi_{xx}(t^n, x_i- \xi) \, d\xi,
\label{intA}
\end{eqnarray}
and the discretization \eqref{discretization14} becomes
\begin{eqnarray*}
\I_{\delta x}^1 [\phi(t^n,\cdot)]_i := \sum_{j = 1}^{A_{\delta x}} \delta x \, \xi_{\delta x}^{-1/3} \, \Phi^n_{\delta x}  =  \sum_{j = 1}^{A_{\delta x}} \int_{(j-1/2)\delta x}^{(j+1/2)\delta x} \xi_{\delta x}^{-1/3} \, \Phi^n_{\delta x} d\xi,
\end{eqnarray*}
with $\xi_{\delta x} := j \delta x$ and $\Phi^n_{\delta x}= \frac{\phi^n_{i-j+1}- 2 \phi^n_{i-j} + \phi^n_{i - j - 1}}{\delta x^2}  $. 
Using \eqref{intA}, we then get the following relation:
\begin{eqnarray*}
\I^1_{\delta x}[\phi(t^n,\cdot)]_i - \I[\phi(t^n,\cdot)](x_i) & = & \sum_{j = 1}^{A_{\delta x}} \int_{(j-1/2)\delta x}^{(j+1/2)\delta x} \left( \xi_{\delta x}^{-1/3} \, \Phi^n_{\delta x} - \xi^{-1/3} \phi_{xx}(t^n, x_i- \xi) \,  \right) \, d\xi \\
&-&  \int_{0}^{\frac{\delta x}{2}} \xi^{-1/3} \phi_{xx}(t^n, x_i- \xi) \, d\xi + \int_{ A  }^{ A + \frac{\delta x}{2}} \xi^{-1/3} \phi_{xx}(t^n, x_i- \xi) \, d\xi \\ 
&-& \int_A^{+\infty} \xi^{-1/3} \phi_{xx}(t^n, x_i- \xi) \, d\xi, \\
& = & T_1 - T_2 + T_3 -T_4.  
\end{eqnarray*}
Let us study the term $T_1$. Since 
\begin{eqnarray*}
 \xi_{\delta x}^{-1/3} \, \Phi^n_{\delta x} - \xi^{-1/3} \phi_{xx}(t^n, x_i- \xi) &=& \left(\xi_{\delta x}^{-1/3} - \xi^{-1/3} \right) \phi_{xx}(t^n, x_i- \xi) \\
&+&  \xi_{\delta x}^{-1/3} \left( \Phi^n_{\delta x} -\phi_{xx}(t^n, x_i- \xi) \right), 
\end{eqnarray*}
then
\begin{eqnarray*}
T_1 & = & \sum_{j = 1}^{A_{\delta x}} \int_{(j-1/2)\delta x}^{(j+1/2)\delta x} \left(\xi_{\delta x}^{-1/3} - \xi^{-1/3} \right) \phi_{xx}(t^n, x_i- \xi) \, d\xi \\ 
& \, & \hspace{2.5 cm} + \sum_{j = 1}^{A_{\delta x}} \int_{(j-1/2)\delta x}^{(j+1/2)\delta x} \xi_{\delta x}^{-1/3} \left( \Phi_{\delta x}^n -\phi_{xx}(t^n, x_i- \xi) \right) \, d\xi, \\
& = & T_{1,1} + T_{1,2} . 
\end{eqnarray*}
By the mean value theorem applied to  $z \rightarrow |z|^{-1/3}$, we have for all $\xi \in [(j-\frac{1}{2}) \delta x; (j+ \frac{1}{2})\delta x]$
\begin{eqnarray*}
|\xi_{\delta x}^{-1/3} - \xi^{-1/3}| & \leq & \sup_{z \in [(j-\frac{1}{2}) \delta x; (j+ \frac{1}{2})\delta x]} |\frac{1}{3} z^{-4/3}| \, |\xi_{\delta x} - \xi |, \\
&\leq& \frac{1}{3} |(j-\frac{1}{2}) \delta x|^{-4/3} |\xi_{\delta x} - \xi|,  \\
& \leq &  \frac{1}{6} |(j-\frac{1}{2}) \delta x|^{-4/3} \delta x. 
\end{eqnarray*}
Thus, integrating over $[(j-\frac{1}{2}) \delta x; (j+ \frac{1}{2})\delta x]$ we get
\begin{eqnarray}
|T_{1,1}| & \leq & C \, \delta x^{2/3}  \sum_{j = 1}^{A_{\delta x}}    \frac{1}{(j-1/2)^{4/3}} \leq C \, \delta x^{2/3},
\label{T11a}
\end{eqnarray}
because $\sum_{j \geq 1}  \frac{1}{(j-1/2)^{4/3}} < + \infty$ and 
$C$ is a positive constant which depends on $||\phi_{xx}||_{L^{\infty}((0,T) \times \R)}$. \\
Moreover, by classical midpoint quadrature rule 
\begin{equation}
\int_{(j-1/2)\delta x}^{(j+1/2)\delta x} \phi_{xx}(t^n, x_i - \xi) \, d\xi = \delta x \, \phi_{xx}(t^n, x_i - \xi_{\delta x}) + \frac{\delta x^3}{24} \phi_{4x} (t^n, x_i - \eta_j), 
\label{formulaerreur}
\end{equation}
with $\eta_j \in [(j-1/2)\delta x; (j+1/2)\delta x]$ then
\begin{eqnarray*}
T_{1,2} & = & \sum_{j = 1}^{A_{\delta x}} (j\delta x)^{-1/3} \left[\delta x \Phi_{\delta x}^{n} - \int_{(j-1/2)\delta x}^{(j+1/2)\delta x} \phi_{xx}(t^n, x_i - \xi) \, d\xi \right], \\
&=& \sum_{j = 1}^{A_{\delta x}} (j\delta x)^{-1/3} \left[ \delta x \Phi_{\delta x}^{n} -  \delta x \, \phi_{xx}(t^n, x_i - \xi_{\delta x}) - \frac{\delta x^3}{24} \phi_{4x} (t^n, x_i - \eta_j)\right].
\end{eqnarray*}
From Taylor series, we have
\begin{equation}
\phi_{xx}(t^n, x_i - \xi_{\delta x} ) = \Phi^n_{\delta x} - \frac{\delta x^2}{12} \phi_{4x}(t^n, x_i - \xi_{\delta x}) + \mathcal{O}(\delta x^4),
\end{equation}
thus we obtain 
\begin{eqnarray*}
T_{1,2} &=& \sum_{j = 1}^{A_{\delta x}} (j \delta x)^{-1/3}\left\lbrace \frac{\delta x^3}{12} \, \phi_{4x} (t^n, x_i - \xi_{\delta x}) - \frac{\delta x^3}{24} \phi_{4x} (t^n, x_i - \eta_j) + \mathcal{O}(\delta x^5) \right\rbrace. 
\end{eqnarray*}
We finally get
\begin{eqnarray*}
|T_{1,2}|& \leq & \mathcal{O}( \delta x^{8/3})  \sum_{j = 1}^{A_{\delta x}} \frac{1}{j^{1/3}} =   \mathcal{O}(\delta x^{8/3}) \left( 1 + \sum_{j = 2}^{A_{\delta x}} \frac{1}{j^{1/3}} \right), \nonumber\\
&=& \mathcal{O}(\delta x^{8/3})\left( 1 +\int_0^{A_{\delta x}} y^{-1/3} \,dy \right) , 
\end{eqnarray*}
which implies that 
\begin{equation}
|T_{1,2} | \leq \mathcal{O}(\delta x^{8/3}) + \mathcal{O}(A^{2/3} \, \delta x^{2}),
\label{T11b} 
\end{equation}
because $A_{\delta x} = \frac{A}{\delta x}$. \\
We next control the term $T_2$ by
\begin{eqnarray}
|T_2| &\leq & ||\phi_{xx}||_{L^{\infty}( (0,T) \times \R )} \int_{0}^{\frac{\delta x}{2}} \, |\xi|^{-1/3} \, d\xi  = C \delta x^{2/3}.
\label{T12}
\end{eqnarray}
We estimate $T_3$ as follows:
\begin{eqnarray}
|T_3| &\leq & ||\phi_{xx}||_{L^{\infty}\left((0,T) \times \R \right) } \int_{A }^{ A + \delta x} |\xi|^{-1/3} \, d\xi, \nonumber  \\
&\leq & ||\phi_{xx}||_{L^{\infty}\left((0,T) \times \R \right)} \, A^{-1/3} \, \delta x. 
\label{T13}
\end{eqnarray}
Finally, using an integration by parts, the term $T_4$ is written as
\begin{eqnarray*}
T_4 &=& \int_A^{+\infty} \xi^{-1/3} \phi_{xx}(t^n,x_i- \xi) \, d\xi, \\
&=& -A^{-1/3} \phi_x(t^n,x_i-A) + \frac{1}{3} \int_A^{+\infty} \xi^{-4/3} \phi_x(t^n, x_i-\xi) \, d\xi,  
\end{eqnarray*}
hence, we obtain
\begin{equation}
|T_4| \leq C A^{-1/3},
\label{T14}
\end{equation}
where $C$ is a positive constant which depends on $||\phi_{x}||_{L^\infty((0,T)\times \R)}$.\\
Hence, using relations \eqref{DL1}, \eqref{DL2}, \eqref{DL3}, \eqref{T11a}, \eqref{T11b}, \eqref{T12}, \eqref{T13} and  \eqref{T14}, we obtain
\begin{eqnarray*}
|E_{\delta x, \delta t, A}^1| = |P\phi(t_n,x_i)-P_{\delta t,\delta x}^{1} \phi | & \leq & \mathcal{O}(\delta x^{2/3}) + \mathcal{O}(\delta t) + \mathcal{O}(A^{-1/3}) \\ 
&\,& + \, \mathcal{O}(A^{2/3} \, \delta x^{2}) + \mathcal{O}\left( A^{-1/3} \, \delta x \right),
\end{eqnarray*}
which completes the proof for $\I_{\delta x}^1$. 



\noindent \textbf{For $\I_{\delta x}^2$:} 
As previously, we rewrite \eqref{intformula} as
\begin{eqnarray*}
\I[\phi(t^n, \cdot)](x_i) & = & \sum_{j=1}^{A_{\delta x} } \int_{(j-1/2)\delta x}^{(j+1/2)\delta x} \Phi(t^n,\xi) \, |\xi|^{-7/3} \, d\xi + \int_{0}^{\frac{\delta x}{2}} \Phi(t^n,\xi) \, |\xi|^{-7/3} \, d\xi \\ 
& + &  \int_{ A + \frac{\delta x}{2} }^{ A } \Phi(t^n,\xi) \, |\xi|^{-7/3} \, d\xi + \int_A^{+\infty}  \Phi(t^n,\xi) \, |\xi|^{-7/3} \, d\xi,
\end{eqnarray*}
with $ \Phi(t^n,\xi) = \frac{4}{9} \left( \phi(t^n, x_i-\xi) -  \phi(t^n, x_i) + \phi_{x}(t^n,x_i) \, \xi \right) $
and the approximated integral \eqref{discretization24} becomes
\begin{eqnarray*}
\I_{\delta x}^2 [\phi(t^n,\cdot)]_i 
& := & \sum_{j = 1}^{A_{\delta x}} \int_{(j-1/2)\delta x}^{(j+1/2)\delta x} \xi_{\delta x}^{-7/3} \, \Phi_{\delta x}^n d\xi,
\end{eqnarray*}
with $\Phi^n_{\delta x} = \frac{4}{9} \left( \phi^n_{i-j} - \phi^n_i + \frac{\phi^n_{i+1} -\phi^n_{i-1}}{2}j \right) $ and $\xi_{\delta x} = j \delta x$. \\
Let us now estimate the error on the nonlocal term. 
\begin{eqnarray*}
\I^2_{\delta x}[\phi(t^n, \cdot)]_i - \I[\phi(t^n, \cdot)](x_i) &=& \sum_{j = 1}^{A_{\delta x}} \int_{(j-1/2)\delta x}^{(j+1/2)\delta x} \left( \Phi_{\delta x}^n \xi_{\delta x}^{-7/3} - \Phi(t^n,\xi)|\xi|^{-7/3} \right) \, d\xi  \\
&-&  \int_{0}^{\frac{\delta x}{2}} \Phi(t^n,\xi) \, |\xi|^{-7/3} \, d\xi + \int_{ A  }^{ A + \frac{\delta x}{2}} \Phi(t^n,\xi) \, |\xi|^{-7/3} \, d\xi \\
&-&  \int_A^{+\infty} \Phi(t^n,\xi) \, |\xi|^{-7/3} \, d\xi, \\
&=& T_1 - T_2 + T_3 - T_4. 
\end{eqnarray*}
Let us study the term $T_1$. As previously for $\I^1_{\delta x}$, we rewrite $T_1$ as
\begin{eqnarray*}
T_1 &=& \sum_{j = 1}^{A_{\delta x}} \int_{(j-1/2)\delta x}^{(j+1/2)\delta x} (\xi_{\delta x}^{-7/3}- |\xi|^{-7/3}) \Phi(t^n,\xi) + \xi_{\delta x}^{-7/3}(\Phi^n_{\delta x} - \Phi(t^n,\xi)) \, d\xi, \\
&=& T_{1,1} + T_{1,2}.  
\end{eqnarray*}
By the mean value theorem applied to $z \rightarrow |z|^{-7/3}$, we have for all $\xi \in [(j-1/2)\delta x; (j+1/2)\delta x]$
\begin{eqnarray*}
| \xi_{\delta x}^{-7/3} - \xi^{-7/3} | &\leq & \sup_{z \in [(j-1/2)\delta x; (j+1/2)\delta x]} |\frac{7}{3} z^{-10/3}| \, |\xi_{\delta x} - \xi |, \\
&\leq &  \frac{7}{6} \delta x  \frac{1 }{| (j-1/2)\delta x |^{10/3}},  \\
&=& \frac{7}{6} \delta x^{-7/3}  \frac{1 }{(j-1/2)^{10/3}}.
\end{eqnarray*}
Next, by Taylor-Lagrange formula, we have
\begin{eqnarray*}
|T_{1,1}| &=& \frac{4}{9} \sum_{j=1}^{A_{\delta x}} \int_{(j-1/2)\delta x}^{(j+1/2)\delta x} |\xi_{\delta x}^{-7/3}- |\xi|^{-7/3}| \left| \phi(t^n, x_i-\xi) -  \phi(t^n, x_i) + \phi_{x}(t^n,x_i) \, \xi  \right|, \\
&\leq & C \sum_{j=1}^{A_{\delta x}} \int_{(j-1/2)\delta x}^{(j+1/2)\delta x}  \delta x^{-7/3} \frac{1}{(j-1/2)^{10/3}} ||\phi_{xx}||_{L^{\infty}((0,T)\times \R)} \frac{\xi^2}{2} \, d\xi, \\
&\leq & C \delta x^{-7/3} \sum_{j=1}^{A_{\delta x}} \frac{1}{(j-1/2)^{10/3}} \int_{(j-1/2)\delta x}^{(j+1/2)\delta x} \xi^2 \, d\xi, \\
&\leq & C \delta x^{-7/3} \sum_{j=1}^{A_{\delta x}} \frac{1}{(j-1/2)^{10/3}} \left(  \delta x^3 j^2 + \delta x^3 \right), \\
&\leq &  C \delta x^{2/3},
\end{eqnarray*}
because $ \sum_{j=1}^{\infty} \frac{j^2}{(j-1/2)^{10/3}} < \infty$. $C$ denotes a positive constant which depends on $||\phi_{xx}||_{L^{\infty}((0,T)\times \R)}$  and may vary from line to line. \\
Moreover, using again midpoint quadrature rule, we have
\begin{equation*}
\int_{(j-1/2)\delta x}^{(j+1/2)\delta x} \Phi(t^n, \xi) \, d\xi = \delta x \, \Phi(t^n, j\delta x) + \frac{\delta x^3}{24} \Phi_{xx}(t^n, \eta_j ),
\end{equation*}
with $\eta_j \in [(j-\frac{1}{2})\delta x, (j+\frac{1}{2})\delta x]$. Hence,
\begin{equation*}
T_{1,2} =  \sum_{j=1}^{A_{\delta x}} \xi_{\delta x}^{-7/3} \left[  \delta x \, \Phi^n_{\delta x} - \delta x \, \Phi(t^n, j\delta x) - \frac{\delta x^3}{24} \Phi_{xx}(t^n, \eta_j ) \right]. 
\end{equation*}
But using again Taylor expansion, we get
\begin{equation*}
\phi_x(t^n, x_i) = \frac{\phi(t^n,x_{i+1})-\phi(t^n,x_{i-1})  }{2 \delta x} - \frac{\delta x^2}{6} \phi_{3x} (t^n, x_i) + \mathcal{O}(\delta x^3),
\end{equation*}
and so 
\begin{eqnarray*}
\Phi^n_{\delta x} - \Phi(t^n,j \delta x) & = & \frac{4}{9} \left[ \phi^n_{i-j}- \phi^n_i + \frac{\phi^n_{i+1}-\phi^n_{i-1}}{2}j - \phi(t^n,x_i-j\delta x)+ \phi(t^n,x_i)  - \phi_x(t^n,x_i) j \delta x  \right],    \\
&=& \frac{4}{9} \left[ \xi_{\delta x} \frac{\delta x^2}{6} \phi_{3 x}(t^n, x_i) + j \mathcal{O}(\delta x^4)\right]. 
\end{eqnarray*}
Thus,
\begin{eqnarray*}
T_{1,2} & = & C \sum_{j=1}^{A_{\delta x}} \xi_{\delta x}^{-7/3} \left[  \xi_{\delta x}  \frac{\delta x^3}{6} \phi_{3 x}(t^n, x_i) + j \mathcal{O}(\delta x^5) - \frac{\delta x^3}{24} \Phi_{xx}(t^n, \eta_j) \right],   \\
& =& C \sum_{j=1}^{A_{\delta x}} \xi_{\delta x}^{-7/3} \left[ \xi_{\delta x}  \frac{\delta x^3}{6} \phi_{3 x}(t^n, x_i) + j \mathcal{O}(\delta x^5) - \frac{4}{9}\frac{\delta x^3}{24}  \phi_{xx}(t^n, x_i-\eta_j) \right].
\end{eqnarray*}
We finally get
\begin{eqnarray*}
 | T_{1,2}| &\leq& C \sum_{j=1}^{A_{\delta x}} \left(j^{-4/3}\delta x^{5/3} + j \mathcal{O}(\delta x^5)+  \delta x^3 \right), \\
  &= & \mathcal{O}(\delta x^{5/3}) + \mathcal{O}\left( A^2 \, \delta x^3 \right) + \mathcal{O}\left( A \, \delta x^2 \right),
\end{eqnarray*}
where $C$ is a positive constant which depends on $||\phi_{3x}||_{L^{\infty}((0,T) \times \R)} $ and $||\phi_{xx}||_{L^{\infty}((0,T) \times \R)} $.\\
Let us now study $T_2$. Using Taylor-Lagrange formula, we have
\begin{eqnarray*}
|T_2| &\leq& C ||\phi_{xx}||_{L^{\infty}\left( (0,T) \times \R \right) } \int_0^{\frac{\delta x}{2}} \frac{\xi^2}{|\xi|^{7/3}} \, d\xi, \\
&\leq & \mathcal{O}(\delta x^{2/3}),
\end{eqnarray*}
where $C$ is a positive constant.\\ 
Let us next consider $T_3$: 
\begin{eqnarray*}
T_3 &= &\int_{A}^{A + \frac{\delta x}{2} } |\xi|^{-7/3} \Phi(t^n, \xi) \, d\xi, \\
&=&\frac{4}{9} \int_{A}^{A + \frac{\delta x}{2} } \left[   \phi(t^n, x_i-\xi) - \phi(t^n, x_i) + \phi_x(t^n, x_i) \xi \right] |\xi|^{-7/3} \, d\xi. 
\end{eqnarray*}
Then
\begin{eqnarray*}
|T_3| &\leq & C \int_{A}^{A + \frac{\delta x}{2} } (|\xi|^{-7/3} + |\xi|^{-4/3} )\, d\xi, \\
&\leq & C\left(A^{-7/3} + A^{-4/3} \right) \delta x,   \\
&\leq & \mathcal{O}\left( \delta x \, A^{-4/3} \right),
\end{eqnarray*}
with $C$ a positive constant which depends on $||\phi||_{L^{\infty}\left( (0,T) \times \R \right) } $ and $||\phi_{x}||_{L^{\infty}\left( (0,T) \times \R \right) } $.\\ 
And since
\begin{eqnarray*}
|T_4| &\leq & \frac{4}{9} \int_A^{+ \infty} |\phi(t^n, x_i - \xi) - \phi(t^n, x_i) | \, |\xi|^{-7/3} \, d\xi + \frac{4}{9}   \int_A^{+ \infty} |\phi_x(t^n, x_i ) | \, |\xi|^{-4/3} \, d\xi, \\
&\leq& C A^{-1/3},
\end{eqnarray*}
where $C$ is a positive constant which depends on $||\phi||_{L^{\infty}((0,T)\times \R)}$ and $||\phi_x||_{L^{\infty}((0,T)\times \R)}$, we finally get
\begin{eqnarray*}
|E_{\delta x, \delta t, A}^2|:= |P\phi(t_n,x_i)-P_{\delta t,\delta x}^{2} \phi |  &\leq &  \mathcal{O}(\delta t) +\mathcal{O}(\delta x^{2/3}) + \mathcal{O}(A^{-1/3}) + \mathcal{O}\left( A^{-4/3} \, \delta x \right) \\
&+&   \mathcal{O}\left( A^2 \, \delta x^3 \right) + \mathcal{O}\left( A \, \delta x^2 \right).
\end{eqnarray*}
The proof of this proposition is now completed. 
\end{proof}


\begin{remark}
From previous Proposition, we can see that the numerical scheme \eqref{FDlinear}  with $\I_{\delta x}^1$ (resp. $\I_{\delta x}^2$) is consistent if $\delta x << A^{-1/3}$ (resp. $\delta x << A^{-2/3}$). 
\end{remark}

\subsection{Convergence experiments.}

In this section, we investigate the convergence using numerical simulations. Despite much effort we are unable to prove theoretically the convergence of the numerical solution towards the exact continuous solution. Indeed, the Lax procedure ``stability $+$ consistence $=$ convergence'' cannot be applied here due to the instability of low frequencies.    
\\
In what follows, $l^{1}$-norm is used to measure the accuracy of approximated solutions. Thus, we analyze the following error 
\begin{equation}
E_{1} = \frac{1}{N}  \sum_{n=0}^{N}   (| u_j^1(T) - u_j^2(T)|),
\end{equation}
where $u^1, u^2$ are, respectively, computed for space steps $\delta x/2$ and $\delta x/4$, until a final time $T$.  \\
Figure \ref{cvx} shows the numerical convergence rates obtained with the initial data displayed in Figure \ref{cinitiale3b}. These rates were obtained using $\delta x = 10^{-1}, 10^{-2}, 10^{-3}, 10^{-4}$. We plot the logarithm of the error $E_1$ versus the logarithm of $\delta x$. 

\begin{figure}[h!]
	\centering
	\includegraphics[scale=0.6]{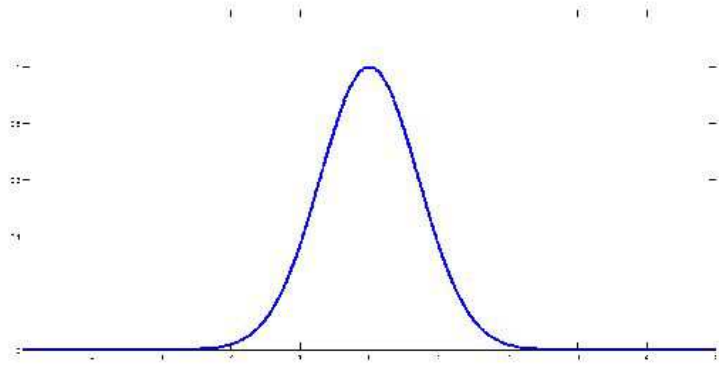} 
	\caption{Initial data used for numerical experiments.}
	\label{cinitiale3b}	
\end{figure}

\begin{figure}[h!]
	\centering
	\includegraphics[scale=0.7]{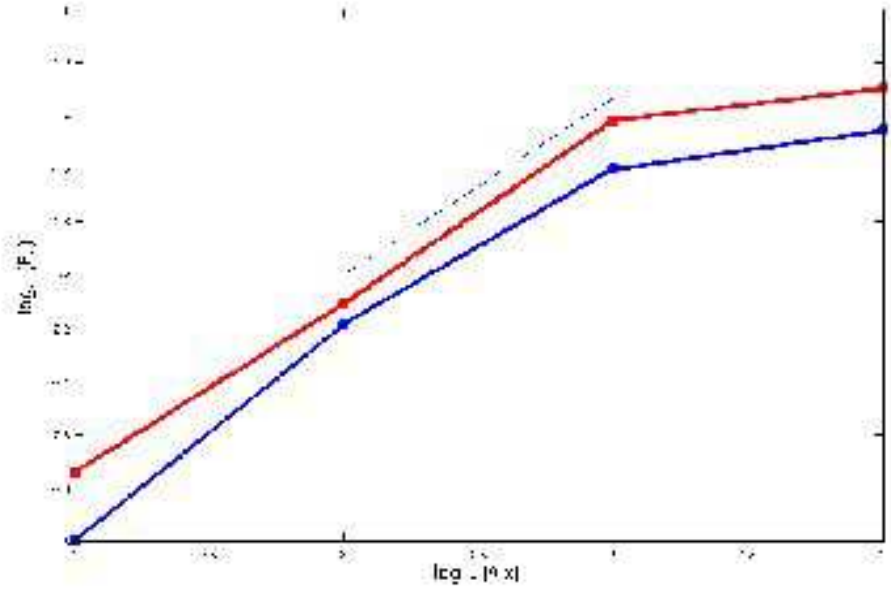} 
	\caption{Convergence in space for $\I_{\delta x}^1$ (red) and $\I_{\delta x}^2$ (blue). Dotted line has slope $2/3$.}
	\label{cvx}	
\end{figure}

\subsection{Phase error}
Numerical schemes produce, according to cases, results ahead of or delayed w.r.t exact solutions. In this section,
we are interested in the error made on the velocity introduced by the discretization. Let us first note that, in addition to the anti-diffusive effect, the nonlocal term is also responsible of the motion of the initial data. Indeed, we saw in Section \ref{preliminaire} that the continuous amplification factor has an imaginary part $e^{- i  \left( \eta \frac{\sqrt{3}}{2} \Gamma(\frac{2}{3}) \,  k \vert k \vert^{1/3} +  \, v  k \right) \delta t}.$
 Therefore, the advection term is not the unique factor of displacement. \\
It is the error on the argument $-\delta t( v\, k + \frac{\sqrt{3}}{2}\Gamma(\frac{2}{3}) \eta \, k \vert k \vert^{1/3})$ that causes the phase error. To evaluate this error, we rewrite the discrete amplification factor $g_j$ introduced in Section \ref{stability} as
\begin{equation*}
g_j = |g_j| e^{-i \theta_{d_j}},
\end{equation*}
for $j=1,2$, where $\theta_{d_j}$ is the argument of the discrete amplification factor $g_j$. The phase lag during one time step is then given by
\begin{equation*}
E_j =   (v \, k \, + \, \frac{\sqrt{3}}{2}\Gamma(\frac{2}{3}) \eta  \, k \, |k|^{1/3}) \delta t  -\theta_{d_j} .
\end{equation*}   
Thus, if $E_j$ is positive, the numerical wave goes slower than the physical wave and it goes faster if $E_j$ is negative. 
We have computed, in Tables \ref{table13}, \ref{table23} and  \ref{table33}, the phase delay after one oscillation 
$$\Delta_j = 1- \frac{\theta_{d_j}}{ (v \, k \, + \, \frac{\sqrt{3}}{2}\Gamma(\frac{2}{3}) \eta  \, k \, |k|^{1/3}) \delta t }, \mbox{ for } j=1,2, $$  
for different values of $Cr = \frac{v \, \delta t}{\delta x}$, $D_f = \frac{2 \, \epsilon \, \delta t}{\delta x^2}$ and $F_o = \frac{\eta \, \delta t}{\delta x^{4/3}} $. \\
We remark that scheme with the discretization $\I_{\delta x}^2$ involves a delay larger than the model with $\I_{\delta x}^1$. 
\subsection{Discrete amplification vs. continuous amplification factors}

Let us define  
$$G_1 = \frac{|g_1 |}{|G_{cont}|}, \, G_2 = \frac{|g_2 |}{|G_{cont}|},$$
for different values of $Cr = \frac{v \, \delta t}{\delta x}$, $D_f = \frac{2 \, \epsilon \, \delta t}{\delta x^2}$ and $F_o = \frac{\eta \, \delta t}{\delta x^{4/3}} $. 
Results are reported in Tables \ref{table13}, \ref{table23} and  \ref{table33}. 
We have $CFL_{mod}^i = Cr + Df + \lambda_i F_o$,  where $\lambda_1 = 2-2^{-1/3}, \lambda_2 = \frac{4}{9} \left(\zeta(\frac{4}{3})-1 \right).  $
When  the condition $CFL_{mod}^i >1$ is violated because
$Cr$ or  $D_f$ are close to one, we note that   high frequencies $\theta > \pi/2$ are more amplified by the discrete schemes than  by the exact continuous problem.  
Whereas when  the condition $CFL_{mod}^i >1$ is violated because
$F_o$ is close to one,  high frequencies $\theta > \pi/2$ are less amplified by the discrete schemes than by the continuous problem, as can be checked in Table \ref{table33}. This is one unexpected benefit of the discretization scheme in the unfavourable case where nonlocal anti-diffusion  is predominant. If we take a closer look at Table \ref{table33} we notice that $|g_2|$ may be greater than one even if $CFL_{mod}^2<1$. This is due to the fact that $\eta$ being big, the stability threshold frequency $\theta_0$ is close to $\pi$, the aliasing limit frequency.    

\begin{table}[htbp]
\begin{center}
\begin{tabular}{*{8}{c}}
  \hline
  $Cr$ & $CFL_{mod}^1$ & $CFL_{mod}^2$        & $\theta$ &   $\Delta_1$      & $\Delta_2$ & $G_1$ & $G_2$   \\
  \hline
  0.2 &  0.5206        &    0.5156           & $\pi/6$  & 0.0082    & 0.0333   &  0.9584   & 0.9788 \\
      &                &                     & $ \pi/4$ & 0.0024    & 0.0573   &  0.9102   & 0.9550 \\
      &                &                     & $\pi/2$  & -0.0715   & 0.1610   &  0.6824   & 0.8394 \\
      &                &                     & $3\pi/4$ & -0.2684   & 0.3911   & 0.3788    & 0.6626 \\
      &                &                     &          &           &          &           &   \\
  0.5 &  0.8206        &   0.8156            & $\pi/6$   & -0.0128   & 0.0104   &  0.9541   & 0.9736   \\
      &                &                     &  $ \pi/4$ & -0.0433   & 0.0091   &  0.9048   & 0.9452   \\
      &                &                     & $\pi/2$   & -0.02476  & -0.0315  &  0.7439   & 0.8152   \\
      &                &                     &  $3\pi/4$ & -0.4733   & -0.1628  &  0.8284   & 0.6391 \\
      &                &                     &           &           &          &           &    \\
 0.9  &  1.2206        &  1.2156             &   $\pi/6$   & -0.0103  & 0.0107   &  0.9870   & 1.0047    \\
      &                &                     &  $ \pi/4$ & -0.0306   & 0.0128   &  0.9869   & 1.0182  \\
      &                &                     &  $\pi/2$   & -0.0781   & 0.0172   &  1.1776   & 1.1522 \\
      &                &                     &  $3\pi/4$ & -0.0210   & 0.0371   &  1.8187   & 1.5053 \\
      &                &                     &            &           &          &            &  \\    
  \hline
\end{tabular}
\end{center}
\caption{  
Dampening and phase error for $D_f =0.2$ and $F_o =0.1 $
\label{table13}}
\end{table}

\begin{table}[htbp]
\begin{center}
\begin{tabular}{*{8}{c}}
  \hline
  $D_f$  & $CFL_{mod}^1$ & $CFL_{mod}^2$  & $\theta$ & $\Delta_1$ & $\Delta_2$    & $G_1$  & $G_2$  \\
  \hline
  0.2    &   0.4206      &  0.4156         & $\pi/6$  & 0.0213      & 0.0481        &  0.9654 &  0.9859  \\
         &               &                 & $ \pi/4$ & 0.0303      & 0.0868        &  0.9250 &  0.9707   \\
         &               &                & $\pi/2$  & 0.0525      & 0.2562        &  0.7340 &  0.9057  \\
         &               &                &  $3\pi/4$ & 0.1721      & 0.5567        &  0.4834 &  0.8487  \\
         &               &                &           &             &               &         &       \\
  0.4    &  0.6206       &  0.6156        & $\pi/6$   & -0.0066     & 0.0216        &  0.9649 &  0.9860 \\
         &               &                &  $ \pi/4$ & -0.0358     & 0.0276        &  0.9216 &  0.9701 \\
         &               &                & $\pi/2$   & -0.3470     & 0.0154        &  0.6750 &  0.8841  \\
         &               &                &  $3\pi/4$ & -2.0019     & 0.0277        &  0.4153 &   0.7059 \\
         &               &                &           &             &               &         & \\
 0.8     &  1.0206       & 1.0156        &  $\pi/6$   & -0.0673    & -0.0361       &  0.9614 &  0.9837 \\
         &               &                & $ \pi/4$ & -0.1989     & -0.1169       &  0.9022 &   0.9568 \\
         &               &                & $\pi/2$   & -3.1203     & -1.4458       &  0.5305 & 0.6566\\
         &               &                &  $3\pi/4$ & -3.8370     & -3.6360       &  5.5254 & 3.5099\\
         &               &                 &           &             &               &         & \\    
  \hline
\end{tabular}
\end{center}
\caption{  
Dampening and phase error for $Cr =0.1$ and $F_o =0.1 $
\label{table23}}
\end{table}

\begin{table}[htbp]
\begin{center}
\begin{tabular}{*{11}{c}}
  \hline
  $F_o$   & $CFL_{mod}^1$      &  $CFL_{mod}^2$  & $\theta$  & $\Delta_1$  & $\Delta_2$ & $G_1$    & $G_2$  & $|g_1|$ &$|g_2|$ & $G_{cont}$ \\
  \hline
  0.2     &   0.5413           &   0.5312        & $\pi/6$   & 0.0307      &  0.0784    &  0.9455  & 0.9852  & 0.9741   & 1.0150  &   1.0302 \\
          &                    &                 &  $ \pi/4$ & 0.0363      &  0.1360    & 0.8852   & 0.9707  & 0.9180   & 1.007   &   1.0371   \\
          &                    &                 & $\pi/2$   & -0.0079     &  0.3495    &  0.6236  & 0.9052  &  0.6240  & 0.9057  &  1.0005 \\
          &                    &                 &  $3\pi/4$ & -0.1876     &  0.6374    &  0.3216  & 0.8182  &  0.2822  &  0.7180 &  0.8776 \\
          &                    &                 &  $\pi $   & -1.2548     &  1.0000    &  0.0574  & 0.6017  &  0.0574 
& 0.6017  &  0.6950  \\
          &                    &                 &           &             &            &          &         &          &         & \\
  0.5     &  0.9031            &  0.8780         & $\pi/6$   & 0.0591      & 0.1552     & 0.8992   & 0.9875  &   1.0092 & 1.1084  &  1.1224  \\
          &                    &                 &  $ \pi/4$ & 0.0650      & 0.2538     & 0.8043   & 0.9758  &  0.9664  & 1.1725  &  1.2016 \\
          &                    &                 & $\pi/2$   & -0.0103     & 0.5264     & 0.5235   & 0.8684  & 0.7589
& 1.2590  &  1.4498 \\
          &                    &                 &  $3\pi/4$ & -0.0906     & 0.7619     & 0.4215   & 0.6524  &  0.6992  & 1.0824  &  1.6590 \\
          &                    &                 &  $\pi $   &  -0.0430    & 1.0000     & 0.4202   & 0.5110  &  0.7435        
& 0.9042  &  1.7694  \\ 
          &                    &                 &           &             &            &          &         &        
&         & \\
 0.9      &  1.3857            &  1.3404         &  $\pi/6$  & 0.1064      & 0.2438     &  0.8602  &  0.9941 &  1.0822 
& 1.2512  &  1.2583   \\
          &                    &                 &  $ \pi/4$ & 0.1361      & 0.3752     &  0.7523  &  0.9762 &  1.0999  & 1.4273  &  1.4621 \\
          &                    &                 & $\pi/2$   & 0.2002      & 0.6556     &  0.5123  &  0.7449 & 1.2178 
& 1.7707  &  2.3771 \\
          &                    &                 &  $3\pi/4$ & 0.2981      & 0.8366     &  0.3873  &  0.4083 & 1.5020 
& 1.5836  &  3.8781 \\
          &                    &                 & $\pi $    & 0.3924      & 1.0000     & 0.2696   & 0.2126  & 1.6583 
& 1.3075  & 6.1519 \\          
          &                    &                 &           &             &            &          &         &   
&         &\\    
  \hline
\end{tabular}
\end{center}
\caption{  
Dampening and phase error for $Cr = 0.1$ and $D_f =0.2. $
\label{table33}}
\end{table}

\section{Concluding remarks \label{conclusion4}}

We have presented in this work a first investigation of finite differences schemes approximating the Fowler equation. We saw that the anti-diffusive behaviour of the nonlocal term does not enable to consider the classical notion of stability. 
Nevertheless, considering only the behaviour of the high frequencies (which should be quickly dampened), we exhibit numerical stability criteria which can be used to make simulations. Numerical computations have shown that numerical schemes dampened more than the continuous problem. Finally, consistency property has been proved and convergence of schemes has been investigated. \\



\bibliographystyle{plain}

\end{document}